\numberwithin{equation}{section}
\newtheorem{theorem}{Theorem}[section]
\newtheorem{lemma}[theorem]{Lemma}
\newtheorem{proposition}[theorem]{Proposition}
\theoremstyle{definition}
\newtheorem{remark}[theorem]{Remark}
\newtheorem{definition}[theorem]{Definition}
\begin{document}
\title[Deviations in stationary LPP]{Moderate deviation and exit time
estimates for stationary Last
Passage Percolation}
\author{Manan Bhatia}
\address{Manan Bhatia, Department of Mathematics, Indian Institute of Science, Bangalore, India}
\email{mananbhatia@iisc.ac.in}
\date{}
\maketitle
\begin{abstract} 
	We consider planar stationary exponential Last Passage Percolation in the positive
	quadrant with boundary weights. For $\rho\in (0,1)$ and points
	$v_N=( (1-\rho)^2 N, \rho^2 N)$ going to infinity along the
	characteristic direction, we establish right tail estimates with the optimal
	exponent for the exit
	time of the geodesic, along with optimal exponent estimates for the upper tail
	moderate deviations for the passage time. For the case $\rho=\frac{1}{2}$ in the stationary model, we
	establish the lower bound estimate with the optimal exponent for the
	lower tail of the passage time. Our arguments are based on moderate
	deviation estimates for point-to-point and point-to-line exponential Last Passage
	Percolation which are obtained via random matrix estimates.
\end{abstract}
\section{Introduction}
	The planar exponential Last Passage Percolation (LPP) model is an
	important and canonical integrable model in the (1+1)-dimensional KPZ
	universality class. The model has been mainly studied for three initial conditions-- step, flat
	and stationary. Currently, there are two main approaches for analysing
	these models-- the first relying on using the random matrix connections
	for the models with the step and flat initial conditions to obtain
	concentration for the passage time \cite{BSS14,BGHK19,BGZ19,BSS19}. The second approach relies on using
	duality along with the Burke property for the stationary initial
	condition \cite{BCS06,Pim16,SS19}.

	For the stationary initial condition, the exit time is an important
	quantity which has been used to establish the correct order of the
	variance of the stationary passage time along the
	characteristic line \cite{BCS06}, along with optimal estimates for
	the coalescence time of two semi-infinite geodesics in exponential
	LPP \cite{Pim16,SS19}. Until very recently, only suboptimal tail estimates were
	available for the exit time \cite{BCS06}. Some estimates for the exit time
	have also been obtained using Fredholm determinantal formulae
	\cite{FO18,FO18+}, and the lower bound with the optimal exponent for the
exit time is known \cite{Sep17, BS10}. Though there are exact
	correspondences to the eigenvalues of certain random matrices for the
	passage time in the case of the step and flat initial conditions
	\cite{Jo00,BGHK19,LR10}, no
	such correspondence is known in the case of the stationary initial
	condition which makes it difficult to directly use inputs from the random matrix
	literature for its
	analysis. 
	
		In this paper, in Theorem \ref{tails_upper_upper} and
	Theorem \ref{tails_lower_lower}, we show that one can use the known
	concentration estimates for
	point-to-point and point-to-line LPP originating from the random matrix
	connections \cite{LR10, BSS14, BGHK19} to obtain optimal moderate deviation concentration estimates
	for upper and lower tails of the passage time in the stationary model. 
	In Theorem \ref{exit_time}, we obtain optimal exponent
	right tail estimates for the exit
	time by using similar techniques. As a matter of fact, a possible proof of the optimal
	exponents via the transversal fluctuation estimates proved in
	\cite{BSS19} using the moderate deviation estimates coming from the random
	matrix connections, together with duality and Busemann functions was
	indicated in \cite{SS19}; we, however, provide a direct proof using the
	moderate deviation estimates without appealing to duality and transversal
	fluctuation estimates. 
	
	As we were finishing up this paper, the paper \cite{EJS20} was posted on the
	arXiv where the optimal exponent right tail estimates for the exit time, as
	well as the optimal exponent upper tail estimates for the passage time for the
	stationary initial condition are obtained. The proof however, is different from ours-- the
	approach is based on obtaining an exact formula for the moment
	generating function for the stationary passage time and then using it to
	obtain the other results. In contrast, our approach proceeds by using the
	random matrix estimates for the point-to-point and the point-to-line
	passage times and then using it to obtain the results for the stationary
	model. In a broader context, the two approaches mentioned at the
	beginning of the introduction have sometimes led to parallel results
	(\cite{BSS19}, \cite{Pim16,SS19} and \cite{BHS18}, \cite{BBS19}), and this
	paper is also in the same spirit.

	The estimates for the lower bound on the upper tail and the upper bound
	on the lower tail of the stationary passage time are immediate from
	comparison with the point-to-point model. For the special case of
	$\rho=\frac{1}{2}$ in the stationary model, we were also able to
	obtain the estimate with the optimal exponent for the lower bound on the lower tail which
	is not available in the literature so far. As the reader will see, our
	proofs only
	depend on the moderate deviation
	estimates from random matrix connections together with the strict concavity of the shape function
	$(\sqrt{x}+\sqrt{y})^2$, and the same proofs are expected to work for other models where such estimates are known, e.g. stationary versions of Poissonian and Geometric LPP.

\paragraph{\textbf{Outline of the paper}} In Section 2, we give the precise
definitions of the models that we are working with and provide the
statements of our main results. In Section 3, we state known results
relating to deviation estimates for the point-to-point and point-to-line
passage times; we will be using these in our arguments. In Section 4, we prove Theorem
\ref{exit_time}, the tight upper bound on the upper tail of the stationary LPP
exit time. The upper and lower tail estimates for the stationary last passage time-- Theorem
\ref{tails_upper_upper} and Theorem \ref{tails_lower_lower} are proved in
Sections 5 and 6 respectively.

	\paragraph{\textbf{Acknowledgements}} The author thanks Riddhipratim Basu for useful
	discussions and valuable encouragements. The author was supported by the
	KVPY fellowship from the Government of India, along with the LTVSP
	program at ICTS, Bangalore.
	\section{Model definitions and main results}
\begin{definition}[Planar exponential LPP]
	\label{step}
	Define a random field
	\begin{displaymath}
		\omega=\left\{ \omega_v : v\in \mathbb{Z}^2 \right\}.
	\end{displaymath}
	where the $\omega_v$ are i.i.d.\ $\mathrm{exp}(1)$ random variables. Given $u,v\in
	\mathbb{Z}^2$ with $u\leq v$ (that is, $u$ is coordinate-wise smaller
	than $v$), for any
	up-right path $\gamma$ from $u$ to $v$, define the weight of the path
	$l(\gamma)$ as 
	\begin{displaymath}
		l(\gamma)=\sum_{w\in\gamma} \omega_w.
	\end{displaymath}
	For any $u\leq v$, define the point-to-point passage time $G(u,v)$ by the maximum
	of $l(\gamma)$ over all up-right paths from $u$ to $v$, and define it to
	be $-\infty$ otherwise. Call the a.s.
	unique path attaining the maximum as the geodesic from $u$ to $v$. In a
	similar manner, we can also define point-to-line passage times. To
	simplify notation later, we define the point-to-line passage time from a
	point to the line $\{x+y=0\}$ with a weight function.
	Namely, if $T$ is a possibly random weight function on the line
	$\{x+y=0\}$, and $v$ is a point above the line, then one can define the
	point-to-line passage time with initial condition $T$ as
	\begin{equation}
		G_T(v)= \max_{u\in \{x+y=0\}}\{T(u)+G(u,v)\}.
		\label{point_to_line_defn}
	\end{equation}
	Note that if $T$ is identically $0$, then we denote the
	corresponding point-to-line passage time by $G_0(\cdot)$. Also note
	that we
	will be using the notation $\mathbb{P}(\cdot)$ when denoting probabilities for
	this model.
\end{definition}

Note that though we defined the point-to-point and point-to-line passage
times for the environment in Definition \ref{step}, these quantities can be
similarly defined for other environments too. We now come to the stationary
LPP model. The Totally Asymmetric Simple Exclusion Process (TASEP) started from a
given initial configuration of particles and holes can be realized as a
corresponding LPP model, and the stationary LPP model is the one
corresponding to the TASEP started from a stationary distribution. Note
that the TASEP and hence the stationary LPP model has a one parameter family
of invariant measures parametrised by the particle density
$\rho \in [0,1]$. We will be using two different
representations of the stationary model. For clarity, we define them
separately. Note that we use $\mathbf{0}$ for $(0,0)$.
\begin{definition}[Boundary representation of stationary LPP with density
	$\rho\in (0,1)$]
	\label{first_representation}
	This model was introduced in \cite{BCS06} and is defined on the positive
	quadrant $\mathbb{Z}^2_{\geq 0}$. Let $\mathrm{e}_1,\mathrm{e}_2$ denote
	the unit vectors along the $x$ and $y$ axes respectively.
	Define a random field
		\begin{displaymath}
			\omega'=\left\{ \omega_v' : v\in \mathbb{Z}_{\geq 0}^2 \right\},
	\end{displaymath}
	where the $\omega_v'$ are independent random variables such that
	$\omega_{\mathbf{0}}'= 0$ and
	$\omega_v'\sim \mathrm{exp}(1)$ for all $v\in \mathbb{Z}^2_{>0}$. For the
	coordinate axes, we have that $\omega_{i\rm{e}_1}'\sim \mathrm{exp}(1-\rho)$ and
	$\omega_{j\rm{e}_2}'\sim \mathrm{exp}(\rho)$ for all $i,j\geq 1$. For any
	$v\in \mathbb{Z}^2_{\geq 0}$, define the stationary
	last passage time
	time $G^1_{\mathrm{stat}}(v)$ as the maximum of $l(\gamma)$ over all up-right paths from
	$\mathbf{0}$ to $v$. Here, $l(\gamma)$ is defined as earlier by using the
	weights from the environment $\omega'$. Call the a.s.
	unique path attaining the maximum as the stationary geodesic to $v$. When denoting probabilities for this model, we
	will use the notation $\mathbb{P}^{\rho}(\cdot)$.
\end{definition}

\begin{definition}[Point-to-line representation of stationary LPP with density
	$\rho
	\in (0,1)$] 
	\label{second_representation}
	We take this model from \cite{FO18}. The model is defined on the upper-right half-plane $\{x+y\geq
	0\}\subseteq \mathbb{Z}^2$. Define the random field
	\begin{displaymath}
		\omega''=\left\{ \omega_v'': v\in \mathbb{Z}^2 \right\},
	\end{displaymath}
	where the $\omega_v''$ are independent random variables such that
	$\omega_v''\sim \mathrm{exp}(1)$ for all $v\in \left\{ x+y>0
	\right\}$, and $\omega_v''=0$ otherwise. For $v\in \{x+y=0\}$, let $\tau_v,\psi_v$ be
	random variables independent of each other and $\omega''$ with the
	marginals $\tau_v\sim \mathrm{exp}(1-\rho)$ and $\psi_v\sim
	\mathrm{exp}(\rho)$. We now define a weight function on the line
	$\{x+y=0\}$. Given $v\in \{x+y=0\}$ with
	$v=(t,-t)$, define
	\begin{displaymath}
		T(v)=
		\begin{cases}
			0, & \text{for } t=0\\
			\sum_{s=1}^{t}\left( \tau_{(s,-s)}-\psi_{(s,-s)} \right),
			&\text{for } t>0\\
			-\sum_{s=t}^{-1}\left( \tau_{(s,-s)}-\psi_{(s,-s)} \right),
			&\text{for } t<0
		\end{cases}
	\end{displaymath}
	For $v\in \{x+y\geq 0\}$, define 
the the last passage time
$G^2_{\mathrm{stat}}(v)$ by
	\begin{equation}
		\label{repn_pt_line}
		G^2_{\mathrm{stat}}(v)=G_T(v)=\max_{u\in \{x+y=0\}}\{T(u)+G(u,v)\},
	\end{equation}
	where the point-to-point passage times $G$ are computed using
	$\omega''$. Call the a.s.
	unique path attaining the maximum (starting from the line
	$\{x+y=0\}$ and ending at $v_N$) as the stationary geodesic to $v$. When denoting probabilities for this model, we will use the notation
	$\overline{\mathbb{P}}^{\rho}(\cdot)$.
\end{definition}

The two representations are known to be equivalent, and a proof is
provided in the appendix. We will mostly be working with the boundary representation of the stationary
model, but the point-to-line representation will be used in the proof of
Theorem \ref{tails_lower_lower} to obtain the lower
bound for the lower tail for the stationary passage time for
$\rho=\frac{1}{2}$.

	\paragraph{\textbf{Notational comments}} We try to use the letters $C,c$ for constants in all the
Lemmas, Propositions and Theorems to prevent cluttering due to an overuse of
subscripts; we do not mean that all of the results are true with the same
constants. Regarding boldface letters, if $x>0$,
then $\mathbf{x}$ denotes $(x,0)$ and $\mathbf{x}^{\uparrow}$ denotes
$(x,1)$. If $x<0$ then $\mathbf{x}$ denote $(0,-x)$ and
$\mathbf{x}^{\uparrow}$ denotes $(1,-x)$. Finally, $\mathbf{0}$ denotes $(0,0)$. For a real variable
$t\neq 0$, we use the notation $\mathfrak{t}$ for the point $(-t,t)$. We
denote $(n,n)$ by $\vv{n}$. By
$\rm{e}_1,\rm{e}_2$, we denote the unit vectors along the $x$ and $y$ axes
respectively in the plane. To prevent cluttering due to ceiling and floor
signs, we do not worry about rounding issues; our arguments are insensitive to
them and remain unchanged.

	\subsection{Statements of results}
	The first result concerns right tail estimates for the exit time in stationary
	LPP. Let $v_N=\left( (1-\rho)^2 N, \rho^2 N \right)$ be a general point on the
characteristic line (see \cite{SS19}) for the stationary model of density
$\rho$. In the setting of Definition \ref{first_representation},
define the exit time $Z^{\mathbf{0}\rightarrow
v_N}$ to be the non-zero coordinate of the point at which the stationary geodesic from $\mathbf{0}$ to $v_N$
exits the coordinate axes, the convention being that the exit time is
positive if the exit occurs along the $x$ axis and negative if it occurs
along the $y$ axis. We prove the following estimate for the right tail of
the exit time:

\begin{theorem}
	\label{exit_time}
	There exist positive constants $N_0$, $C,c$ that depend only on $\rho \in
	(0,1)$ such
	that for all $r>0$, $N\geq N_0$, we have
	\begin{displaymath}
		\mathbb{P}^{\rho}\left( |Z^{\mathbf{0}\rightarrow v_N}| \geq rN^{2/3}
		\right)\leq Ce^{-cr^3}.
	\end{displaymath}
\end{theorem}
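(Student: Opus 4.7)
My plan is to reduce the event $\{Z^{\mathbf{0}\rightarrow v_N}\geq rN^{2/3}\}$ (the other sign is symmetric under $\rho\leftrightarrow 1-\rho$) to a comparison between two stochastic quantities whose means are separated by $\Theta(r^2 N^{1/3})$. On this event, the stationary geodesic exits the $x$-axis at some $\mathbf{Z}$ with $Z\geq rN^{2/3}$, so $G^1_{\mathrm{stat}}(v_N) = \sum_{i=1}^{Z}\omega'_{i\mathrm{e}_1}+G(\mathbf{Z}^{\uparrow},v_N)$. Combining this with the trivial lower bound $G^1_{\mathrm{stat}}(v_N)\geq \omega'_{\mathrm{e}_1}+G((1,1),v_N)$ coming from an immediate exit at $\mathrm{e}_1$, the event is contained in
\[
  \sup_{a\geq rN^{2/3}}\left[\sum_{i=1}^{a}\omega'_{i\mathrm{e}_1}+G(\mathbf{a}^{\uparrow},v_N)\right]\;\geq\; \omega'_{\mathrm{e}_1}+G((1,1),v_N).
\]
The key deterministic input is the strict concavity of the shape function along the boundary: Taylor-expanding $\tfrac{a}{1-\rho}+(\sqrt{(1-\rho)^2 N-a}+\rho\sqrt{N})^2$ about $a=0$ yields $N-\tfrac{\rho}{4(1-\rho)^3}\tfrac{a^{2}}{N}+O(a^{3}/N^{2})$, so the mean of the bracket at $a=rN^{2/3}$ is smaller than $\mathbb{E}[G((1,1),v_N)]$ by $\Theta(r^{2}N^{1/3})$. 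It then suffices to show that both sides concentrate around their means on the scale $r^{2}N^{1/3}$ with tail at most $e^{-cr^{3}}$.

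The right-hand side is immediate from the point-to-point lower tail moderate deviation estimate in Section 3: the $s^3$-type exponent at $s=cr^{2}$ gives $\mathbb{P}(G((1,1),v_N)\leq \mathbb{E}[G((1,1),v_N)]-\tfrac{c}{2}r^{2}N^{1/3})\leq e^{-cr^{6}}$, comfortably within budget. For the left-hand supremum I would dyadically decompose $a\geq rN^{2/3}$ into scales $I_k=[2^k rN^{2/3},\,2^{k+1}rN^{2/3}]$, $k=0,1,\ldots,O(\log N)$, and handle each separately. On $I_k$ the mean of the bracket is decreasing in $a$ and maximised at the left endpoint $a_k=2^k rN^{2/3}$ with value $N-\Theta(2^{2k}r^{2}N^{1/3})$. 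Upward fluctuations uniformly over $I_k$ split into (i) the supremum of the centred random walk $\sum_{i=1}^{a}(\omega'_{i\mathrm{e}_1}-\tfrac{1}{1-\rho})$ for $a\in I_k$, controlled by Kolmogorov's maximal inequality combined with Bernstein's estimate for sums of sub-exponentials to give $e^{-c\,2^{3k}r^{3}}$ at scale $2^{2k}r^{2}N^{1/3}$, and (ii) the supremum of $G(\mathbf{a}^{\uparrow},v_N)-\mathbb{E}[G(\mathbf{a}^{\uparrow},v_N)]$ over $a\in I_k$, controlled by the upper tail point-to-line moderate deviation from Section 3 applied to the segment $\{\mathbf{a}^{\uparrow}:a\in I_k\}\to v_N$, whose $3/2$-exponent at deviation $2^{2k}r^{2}N^{1/3}$ again gives $e^{-c\,2^{3k}r^{3}}$. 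Summing the geometric series in $k$ produces the total $Ce^{-cr^{3}}$.

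The main obstacle I anticipate is verifying that the point-to-line upper tail input from Section 3 applies with the correct $3/2$-exponent uniformly across all dyadic scales; in particular, for the large-$k$ scales where the segment length $2^k rN^{2/3}$ greatly exceeds the transversal fluctuation scale $N^{2/3}$, one must check that the shape-function deficit at the best endpoint $a_k$ is the dominant contribution to the maximum mean along the segment and that the tail constants do not degrade with the segment length. With this in place, the supremum is at most $N-cr^{2}N^{1/3}$ while the right-hand side is at least $N-\tfrac{c}{2}r^{2}N^{1/3}$, both with probability at least $1-Ce^{-cr^{3}}$, ruling out the event in the moderate regime $1\leq r\leq \delta N^{1/3}$. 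The regime $r\geq \delta N^{1/3}$ is trivial: beyond $r=(1-\rho)^{-2/3}N^{1/3}$ the event is impossible since $|Z|\leq (1-\rho)^2 N$, and the remaining constant window is absorbed by crude large-deviation control of the boundary weight sum.
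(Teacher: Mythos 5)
Your proposal matches the paper's proof in all essential respects: both reduce $\{Z^{\mathbf{0}\rightarrow v_N}\geq rN^{2/3}\}$ to a comparison between $\sup_{a\geq rN^{2/3}}\{\text{boundary walk}+G(\mathbf{a}^{\uparrow},v_N)\}$ and a lower bound for $G^1_{\mathrm{stat}}(v_N)$, exploit the $\Theta(r^2 N^{1/3})$ gap coming from strict concavity of the shape function, and control the supremum block-by-block via a maximal inequality for the boundary random walk and the point-to-line moderate deviation estimate for the interior passage time. The obstacle you flag is real but not fatal: Proposition \ref{point_to_line} is stated only for segments of length $2N^{2/3}$, so each dyadic block $[2^k rN^{2/3},2^{k+1}rN^{2/3}]$ must itself be cut into $\sim 2^k r$ unit-width pieces and union-bounded, at which point your dyadic scheme collapses into the paper's direct decomposition into intervals $[jN^{2/3},(j+1)N^{2/3}]$ for integer $j\geq r$ (Proposition \ref{r,r+1}) followed by the sum $\sum_{j\geq r}e^{-cj^3}\leq Ce^{-cr^3}$, so the dyadic structure ultimately buys nothing. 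Your far-tail treatment for $r\gtrsim N^{1/3}$, and the choice to compare against $\omega'_{\mathrm{e}_1}+G((1,1),v_N)$ rather than $G^1_{\mathrm{stat}}(v_N)$ itself, also mirror the paper (the latter is exactly how Lemma \ref{first_term} lower-bounds the stationary passage time).
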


As we already mentioned in the introduction, the above result was very
recently obtained in the paper \cite{EJS20}. There, it was obtained by using
explicit calculations for the log-moment generating function of the
stationary passage time. The lower bound with the optimal exponent for the
exit time is also known \cite{Sep17, BS10}. Some estimates for the exit
time along similar
lines are also available in \cite{FO18+}. Now, we state the upper tail
estimates that we obtain for the stationary last passage time along the characteristic line.
	\begin{theorem}
			\label{tails_upper_upper}
				For each fixed $\delta_1\in(0,1)$, there exist constants $C,c$
				depending only on $\delta_1, \rho\in
				(0,1)$ such
			that for all $N\geq N_0$ and $y$ satisfying $\delta_1
					N^{2/3}>y>0$ , we have 
			\begin{enumerate}
				\item	$\mathbb{P}^{\rho}(G^1_{\mathrm{stat}}(v_N)-N\geq
					yN^{1/3})\leq Ce^{-cy^{3/2}}$.
				\item $\mathbb{P}^{\rho}(G^1_{\mathrm{stat}}(v_N)-N\geq
				yN^{1/3})\geq Ce^{-cy^{3/2}}$.
			\end{enumerate}
		\end{theorem}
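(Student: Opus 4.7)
The plan is to handle the two halves of Theorem \ref{tails_upper_upper} by distinct routes: statement (2), the lower bound on the upper tail, by a trivial comparison with the point-to-point model, and statement (1), the upper bound, by combining Theorem \ref{exit_time} with the point-to-point moderate deviation estimates recorded in Section 3.

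For (2), restricting the stationary passage time to paths that make a single step along the $x$-axis and then follow an optimal bulk geodesic gives
\begin{displaymath}
G^1_{\mathrm{stat}}(v_N) \geq \omega'_{\mathrm{e}_1} + G((1,1), v_N).
\end{displaymath}
Since $\omega'_{\mathrm{e}_1} \geq 0$ and $G((1,1), v_N)$ is a standard exponential point-to-point passage time to a point at shape-theoretic distance $N + O(N^{1/3})$, the required lower bound $ce^{-Cy^{3/2}}$ follows at once from the lower bound on the upper tail for point-to-point LPP quoted in Section 3.

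For (1), first apply Theorem \ref{exit_time} with threshold $r = K y^{1/2}$ to absorb $\{|Z^{\mathbf{0}\to v_N}| \geq K y^{1/2} N^{2/3}\}$ at cost $Ce^{-cK^3 y^{3/2}}$. On the complement, by symmetry assume exit along the $x$-axis at $Z \in [1, K y^{1/2} N^{2/3}]$, so that
\begin{displaymath}
G^1_{\mathrm{stat}}(v_N) = S_Z + G((Z,1), v_N), \qquad S_z := \sum_{i=1}^{z} \omega'_{i\mathrm{e}_1},
\end{displaymath}
with the two summands independent given $Z$. Partition $Z$ into dyadic blocks $[2^k, 2^{k+1}]$; writing $z = \alpha N^{2/3}$ and Taylor-expanding the shape function $(\sqrt{x}+\sqrt{y})^2$ around the endpoint $v_N - (z,0)$ yields
\begin{displaymath}
\mathbb{E}[S_z + G((z,1), v_N)] = N - c_\rho \alpha^2 N^{1/3} + O(\alpha^3/N^{1/3}),
\end{displaymath}
with $c_\rho > 0$ from strict concavity of the shape. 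Hence the event at this scale forces a combined fluctuation of order $(y + c_\rho \alpha^2) N^{1/3}$ in the independent sum $S_z + G((z,1), v_N)$, which one splits between the two summands and controls by the exponential-sum Chernoff bounds for $S_z$ (Gaussian tail $\exp(-cs^2/z)$ transitioning to $\exp(-cs)$) and the Section 3 point-to-point moderate deviation estimate for $G((z,1), v_N)$ (tail $\exp(-cs^{3/2})$). An elementary case analysis in $\alpha \leq Ky^{1/2}$ gives $\exp(-cy^{3/2})$ per scale, and summing the $O(\log N)$ dyadic contributions (absorbing the logarithmic factor into the constants for $y$ large and using triviality for $y$ bounded) completes the proof.

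I expect the main technical obstacle to be the honest control of the supremum over $z$ within each dyadic block: the naive bound $\max_{z \in [2^k, 2^{k+1}]}(S_z + G((z,1), v_N)) \leq S_{2^{k+1}} + G((2^k, 1), v_N)$ loses a factor $\alpha N^{2/3}/(1-\rho)$ from the overcounted boundary sum, which is vastly larger than the desired scale $N^{1/3}$. The workaround is to pass to the centered processes $\tilde S_z := S_z - z/(1-\rho)$ (a mean-zero random walk whose maximum over the block is controlled by Doob's inequality) and $\tilde G_z := G((z,1), v_N) - \mathbb{E}[G((z,1), v_N)]$ (whose values over the block are strongly correlated on the transversal-fluctuation scale and hence reducible to a single representative with a small correction), and to verify that the cancellation between the linear shape correction in $\mathbb{E}[G((z,1), v_N)]$ and $\mathbb{E}[S_z]$ is clean enough to produce the effective mean gap $c_\rho \alpha^2 N^{1/3}$ uniformly in $\alpha$.
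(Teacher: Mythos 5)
Your treatment of part (2) is essentially the same as the paper's: both reduce to the point-to-point lower bound on the upper tail (Proposition \ref{import_upper_tail_lower_bound}), and this part is fine.

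For part (1), your outline shares the paper's overall skeleton — decompose on the exit location, use the quadratic mean gap $\sim c_\rho\alpha^2 N^{1/3}$ coming from strict concavity, split the fluctuation between the boundary random walk and the bulk passage time, and control the boundary walk's maximum via Doob after centering — so the broad structure is sound. But there is a concrete gap you yourself flag and then do not close: the control of $\sup_{z\text{ in block}}\bigl(G((z,1),v_N)-\mathbb{E}[G((z,1),v_N)]\bigr)$. You dismiss this by saying the centered bulk values are ``strongly correlated on the transversal-fluctuation scale and hence reducible to a single representative with a small correction.'' That is a re-statement of what must be proved, not a proof; your ingredient list only invokes point-to-point moderate deviations, which give no information about a supremum over an interval of starting points. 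Moreover, with your dyadic blocking, the $k$-th block has size $2^k$, which for the relevant range $\alpha\lesssim\sqrt{y}$ and $y<\delta_1 N^{2/3}$ can be as large as order $N$ — vastly exceeding the transversal-fluctuation scale $N^{2/3}$ on which any such correlation heuristic could even begin to hold. So in the regime $2^k\gg N^{2/3}$ the heuristic is simply false as stated, and even in the regime $2^k\lesssim N^{2/3}$ you still need a quantitative interval-supremum estimate.

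The paper's proof fills exactly this hole with Proposition \ref{point_to_line} (a corrected-exponent version of Theorem 10.5 of \cite{BSS14}): it controls the supremum of the centered point-to-point passage time over an interval of length $N^{2/3}$ with the optimal $e^{-cy^{3/2}}$ tail. To match that, the paper blocks \emph{arithmetically} in units of exactly $N^{2/3}$, applying Proposition \ref{point_to_line} once per block and then summing over $r\lesssim N^{1/3}$ blocks, with the quadratic mean gap making the sum converge. If you want to keep dyadic blocks, you would still have to chop each dyadic block into $\sim 2^k/N^{2/3}$ sub-blocks of size $N^{2/3}$ and apply the interval-supremum estimate on each, at which point the dyadic layer buys you nothing over arithmetic blocking. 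So: cite or prove a version of Proposition \ref{point_to_line} and substitute it where you currently wave at correlations, and your proof should go through. Without that ingredient, the proof is incomplete.

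One smaller remark: your early application of Theorem \ref{exit_time} with threshold $r=Ky^{1/2}$ is a legitimate (and arguably cleaner) variant — the paper instead only uses the exit-time bound for the crude far tail $|Z|\geq\gamma_1 N$ and lets the quadratic gap kill the sum over all $r\lesssim N^{1/3}$. Both work; yours trims the range of $r$ up front at the cost of adding a separate $e^{-cK^3y^{3/2}}$ term.
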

We now state the lower tail estimates that we obtain for the stationary last passage time
along the characteristic line.
		\begin{theorem}
			\label{tails_lower_lower}
			For any  fixed $\delta_1\in (0,1)$, there exist constants $C,c$
			depending on $\rho, \delta_1$ such
			that for all $N\geq N_0$ and $y$ satisfying $\delta_1
					N^{2/3}>y>0$, we have
			\begin{enumerate}
				\item $\mathbb{P}^{\frac{1}{2}}(G^1_{\mathrm{stat}}(v_N)-N\leq
					-yN^{1/3})\geq Ce^{-cy^3}$. 
				\item $\mathbb{P}^{\rho}(G^1_{\mathrm{stat}}(v_N)-N\leq
					-yN^{1/3})\leq Ce^{-cy^3}$.
			\end{enumerate}	
		\end{theorem}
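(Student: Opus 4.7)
The two parts of the theorem have quite different characters. For Part (2), the upper bound on the lower tail, I would rely on a direct comparison with the bulk point-to-point model. Coupling the stationary boundary field $\omega'$ with an i.i.d.\ $\mathrm{exp}(1)$ bulk field $\omega$ via the stochastic dominations $\mathrm{exp}(1-\rho) \succeq \mathrm{exp}(1)$ and $\mathrm{exp}(\rho) \succeq \mathrm{exp}(1)$ on the axes, and setting $\omega'_v = \omega_v$ in the strict bulk, one sees that $\omega'_{\mathbf{0}} = 0$ is the only vertex at which $\omega'$ is smaller than $\omega$. Evaluating the $\omega$-geodesic from $\mathbf{0}$ to $v_N$ in the $\omega'$ environment then yields $G^1_{\mathrm{stat}}(v_N) \geq G(\mathbf{0}, v_N) - \omega_{\mathbf{0}}$; decomposing according to whether $\omega_{\mathbf{0}}$ exceeds a fixed constant and applying the cubic-exponent upper bound on the lower tail of $G(\mathbf{0}, v_N)$ from Section 3 gives the claim.

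For Part (1), the more substantial content, I would switch to the point-to-line representation of Definition \ref{second_representation}, writing $G^2_{\mathrm{stat}}(v_N) = \max_t \{T(u_t) + G(u_t, v_N)\}$ with $u_t = (-t, t)$. For $\rho = 1/2$, $T$ is a centered random walk with sub-exponential increments $\tau - \psi$, independent of the bulk $\omega''$. The plan is to exhibit an event $\mathcal{E} = \mathcal{E}_1 \cap \mathcal{E}_2 \cap \mathcal{E}_3$ of probability at least $Ce^{-cy^3}$ on which the maximum is bounded by $N - yN^{1/3}$. Let $\mathcal{E}_1 = \{G_0(v_N) \leq N - 3yN^{1/3}\}$ be the bulk event, which forces $G(u_t, v_N) \leq N - 3yN^{1/3}$ for every $t$ and has probability $\geq Ce^{-cy^3}$ by the cubic-exponent lower bound on the lower tail of point-to-line LPP from Section 3. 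Let $\mathcal{E}_2 = \{\max_{|t| \leq L}|T(u_t)| \leq yN^{1/3}\}$ with $L \asymp y^2 N^{2/3}$ be the central random-walk event; by Kolmogorov's maximal inequality this has positive probability. Since $\mathcal{E}_1$ depends only on $\omega''$ while $\mathcal{E}_2$ depends only on $T$, the two are independent and $\mathbb{P}(\mathcal{E}_1 \cap \mathcal{E}_2) \geq C'e^{-cy^3}$; on this intersection and for $|t| \leq L$, $T(u_t) + G(u_t, v_N) \leq yN^{1/3} + N - 3yN^{1/3} = N - 2yN^{1/3}$.

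The tail event $\mathcal{E}_3$, requiring $T(u_t) + G(u_t, v_N) \leq N - yN^{1/3}$ for all $|t| > L$, is the main obstacle. Here I would exploit the strict concavity of the shape function: $\mathbb{E}[G(u_t, v_N)] \approx N - 4t^2/N$, which is already below $N - 4y^4 N^{1/3}$ on the range $|t| > L = y^2 N^{2/3}$, producing a sizeable deterministic drift against which the random fluctuations must work. On a dyadic scale $|t| \in [2^{k-1}L, 2^k L]$ with rescaling $s = |t|/N^{2/3}$, the Gaussian upper tail for $T(u_t)$ (tail exponent $M^2/|t|$) and the $e^{-cs^{3/2}}$ KPZ upper-tail bound for the centered passage time from Section 3 each contribute deviations of probability $e^{-cs^3}$, and union-bounding over $k \geq 0$ and at most $sN^{2/3}$ points per scale yields a total tail-failure probability bounded by $y^2 N^{2/3} e^{-cy^6}$, comfortably dominated by $e^{-cy^3}$ once $y \gtrsim (\log N)^{1/6}$. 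For smaller $y$ the target probability is of constant order, and a simplified version of the same scheme (e.g.\ treating $y$ as bounded by a fixed constant) closes that regime, yielding the desired bound $\mathbb{P}(G^2_{\mathrm{stat}}(v_N) - N \leq -yN^{1/3}) \geq Ce^{-cy^3}$.
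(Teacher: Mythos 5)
Your overall architecture mirrors the paper's (point-to-line representation for Part (1), bulk comparison for Part (2)), but both parts as written have gaps that prevent them from closing.

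For Part (2), the ``fixed constant'' decomposition of $\omega_{\mathbf{0}}$ does not give the stated cubic bound: the event $\{\omega_{\mathbf{0}} > K\}$ has probability $e^{-K}$, a constant, which is not $\lesssim e^{-cy^3}$ uniformly over the range $0 < y < \delta_1 N^{2/3}$. If you instead split at a $y$-dependent level like $\frac{y}{2}N^{1/3}$, the residual probability $e^{-\frac{y}{2}N^{1/3}}$ is only $\lesssim e^{-cy^{3/2}}$ in general (and fails to be $\lesssim e^{-cy^3}$ once $y \gtrsim N^{1/6}$). The clean fix is to note that your coupling actually gives $G(\mathbf{0},v_N)-\omega_{\mathbf{0}} = \max\{G(\mathrm{e}_1,v_N),G(\mathrm{e}_2,v_N)\}$, which is independent of $\omega_{\mathbf{0}}$, so no decomposition is needed; or, as the paper does, simply observe the deterministic inequality $G^1_{\mathrm{stat}}(v_N)\geq G((1,1),v_N)$ in the $\omega'$ environment and apply Proposition \ref{point_to_point} to a point-to-point time.

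For Part (1), the events $\mathcal{E}_1,\mathcal{E}_2$ agree with the paper's strategy (the Kolmogorov appeal for $\mathcal{E}_2$ works only after choosing the implied constant in $L\asymp y^2N^{2/3}$ small enough that the bound $8L/(yN^{1/3})^2$ is $<1$, but this is a cosmetic fix). The substantive gap is in $\mathcal{E}_3$. Your pointwise union bound over $|t|>L$ produces a prefactor of order $y^2 N^{2/3}$, giving $\mathbb{P}(\mathcal{E}_3^c)\lesssim y^2 N^{2/3}e^{-cy^6}$. This is \emph{not} dominated by the target $e^{-cy^3}$ for $y$ in the range $y_0 < y \lesssim (\log N)^{1/6}$, where the target probability still decays in $y$ (as $e^{-c\sqrt{\log N}}$ at the upper end, which is not of constant order as you assert); whether $N^{2/3}e^{-cy^6}$ is small there depends on the unknown constant $c$. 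The paper avoids the $N^{2/3}$ factor entirely by recognizing the tail failure event $\mathcal{E}_3^c$ as (contained in) the event that the point-to-line geodesic exits outside $[-L,L]$, and then transferring this, via the geodesic coupling of Proposition \ref{geodesic_coupling}, to $\mathbb{P}^{\frac12}(|Z^{\mathbf{0}\to v_N}|\geq y^2 N^{2/3})\leq Ce^{-cy^6}$ from Theorem \ref{exit_time}. Equivalently, one could repair your argument by replacing pointwise bounds with max-over-segment estimates in the style of Proposition \ref{point_to_line}, so that each $N^{2/3}$-block of the tail contributes only a single $e^{-cm^3}$ term with no polynomial prefactor; but as written, with $sN^{2/3}$ points per dyadic scale, the bound does not close.
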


	Though we give matching upper and lower bounds only for
	$\rho=\frac{1}{2}$, the upper bound that we give for general $\rho$ is
	also optimal. Indeed, in the limit $N\rightarrow \infty$, the normalized
	passage time
	$\frac{G^1_{\mathrm{stat}}(v_N)-N}{N^{1/3}}$ is known to converge in
	distribution
	to the Baik-Rains distribution which is known to have the tail
	estimates \cite{BR00} that we obtain for the finite $N$ case. At this
	point we are unable to obtain the lower bound for general $\rho$; the
	reader can refer to
	Remark \ref{difficulties} for a discussion of the difficulties. 
	
	As we mentioned earlier, the optimal right tail estimate for
	the exit time, as well as the optimal upper bound estimate for the upper tail were recently obtained in the paper
	\cite{EJS20} by a different approach. It is plausible that the
	explicit moment generating function calculations used in \cite{EJS20} can
	also be
	used to obtain the results for the lower tail, but we think that it is of
	value to observe that these can also be obtained by comparison with the
 exponential LPP models with the step and flat initial conditions.

\section{Technical Ingredients}
For exponential LPP, we know that
$\frac{\mathbb{E}[G(\mathbf{0},\alpha(m,n))]}{\alpha}\rightarrow (\sqrt{m}+\sqrt{n})^2$ as
$\alpha\rightarrow \infty$ \cite{Ro81,Jo00}. To reduce clutter, define
$f((m,n))=(\sqrt{m}+\sqrt{n})^2$.  Similarly, define
$g(x)=\frac{x}{1-\rho}$ for $x\geq 0$ and $g(x)=-\frac{x}{\rho}$ for
$x<0$. Finally, define $h(x)=\frac{\rho x^2}{4(1-\rho)^3 }$ for $x\geq 0$ and
$h(x)=\frac{(1-\rho)x^2}{4\rho^3}$ for $x<0$.

The
following lemma roughly says that for $\mathbf{x}$ where $1<x\ll N$,
$\mathbb{E}^{\rho}[G^1_{\mathrm{stat}}(\mathbf{x})]+\mathbb{E}^{\rho}[G(
\mathbf{x}^{\uparrow},v_N)]\simeq
\mathbb{E}^{\rho}[G^1_{\mathrm{stat}}(v_N)]-\frac{\rho x^2}{4(1-\rho)^3N}$, where $\mathbf{x}^{\uparrow}=(x,1)$. Note that
$\mathbb{E}^{\rho}[G(
\mathbf{x}^{\uparrow},v_N)]=\mathbb{E}[G(
\mathbf{x}^{\uparrow},v_N)]$ because in the positive quadrant, the boundary
representation of stationary
LPP differs from exponential LPP only at
the boundaries. Also note that
$\mathbb{E}^{\rho}[G^1_{\mathrm{stat}}(\mathbf{x})]=\frac{x}{1-\rho}$ for
$x>0$ because it is a sum of
$x$ independent exp($1-\rho$) weights with a similar corresponding statement holding for
$x<0$.
\begin{lemma}
	\label{expecatation_x^2/n}
	For all $x$ with $-\rho^2 N<x<(1-\rho)^2 N$ , we have
	\begin{displaymath}
		g(x)+f(v_N-\mathbf{x}^{\uparrow})=N-\frac{h(x)}{N}-N\mathcal{O}\left(
		(\frac{x}{N})^3\right),
	\end{displaymath}
	 where the $\mathcal{O}\left( (\frac{x}{N})^3\right)$ is a term
	that is strictly
	positive for all $x$ in the given range.
\end{lemma}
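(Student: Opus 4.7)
The plan is to verify the identity by a direct Taylor expansion of the shape function $f$. By the symmetry that swaps $\rho\leftrightarrow 1-\rho$ together with the two coordinate axes, it is enough to handle the case $x>0$, where $\mathbf{x}^\uparrow=(x,1)$ and
\[
f(v_N-\mathbf{x}^\uparrow)=\bigl(\sqrt{(1-\rho)^2 N - x}+\sqrt{\rho^2 N - 1}\bigr)^2.
\]
Factoring $(1-\rho)\sqrt{N}$ out of the first radical and $\rho\sqrt{N}$ out of the second, I would set $y_1 = x/((1-\rho)^2 N)\in(0,1)$ and $y_2=1/(\rho^2 N)$, and expand using
\[
\sqrt{1-y} = 1 - \tfrac{y}{2} - \tfrac{y^2}{8} - \sum_{k\ge 3} c_k\, y^k,
\]
in which every Taylor coefficient $c_k$ is strictly positive.

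The next step is to carry out the expansion and match orders in $x$. The algebraic identity $(1-\rho)^2 + 2\rho(1-\rho) + \rho^2 = 1$ guarantees that the zeroth-order contribution in $x$ is exactly $N$. At first order, the $-x$ coming from the square of the first radical combines with the $-\rho x/(1-\rho)$ produced by the cross term $2\sqrt{ab}$ to give $-x/(1-\rho) = -g(x)$, cancelling $g(x)$ on the left-hand side; this cancellation is the analytic signature of $v_N$ sitting on the characteristic line for density $\rho$. At second order in $x$, the only contribution comes from $-y_1^2/8$ in the expansion of $\sqrt{1-y_1}$, and evaluates to $-\rho x^2/(4(1-\rho)^3 N) = -h(x)/N$.

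The remaining contributions are the cubic-and-higher powers of $y_1$. Because each coefficient $-c_k$ ($k\ge 3$) is strictly negative and $y_1>0$, the sum multiplied by $2\rho(1-\rho)N$ produces a strictly negative total contribution whose magnitude is bounded by $C\,N\, y_1^3 = C\, x^3/((1-\rho)^6 N^2)$ uniformly in the range $0<x<(1-\rho)^2 N$. This matches, in both size and sign, the strictly positive $N\mathcal{O}((x/N)^3)$ being subtracted on the right-hand side. The small perturbations stemming from the ``$-1$'' in $\sqrt{\rho^2 N - 1}$ (which distinguishes $\mathbf{x}^\uparrow=(x,1)$ from $(x,0)$) produce only lower-order terms and can be absorbed into the same error.

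The main obstacle is essentially bookkeeping: one has to track cross terms in the double Taylor expansion carefully enough to confirm that nothing beyond the order $N(x/N)^3$ residue has been missed and that the sign of that residue is unambiguously negative throughout the range. No deeper idea is needed beyond the Taylor series of $\sqrt{1-y}$ together with the defining algebraic property $(1-\rho)^2 + 2\rho(1-\rho) + \rho^2 = 1$ of the characteristic direction, which encodes precisely the second-derivative information of the shape function $f$ transverse to the characteristic.
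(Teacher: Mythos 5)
Your proof is essentially the same as the paper's (whose proof is just the one-line remark ``plug in $f(v_N)=N$ and Taylor expand''), and your expansion is carried out correctly: the zeroth-, first-, and second-order terms in $x$ match $N$, $-g(x)$, and $-h(x)/N$ exactly via $(1-\rho)^2+2\rho(1-\rho)+\rho^2=1$, and the cubic-and-higher remainder from $\sqrt{1-y_1}$ has the right sign because every Taylor coefficient of $\sqrt{1-y}$ beyond the constant is negative.

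One small inaccuracy: the perturbation from the ``$-1$'' in $\sqrt{\rho^2 N-1}$ is \emph{not} of lower order than $N(x/N)^3$ for small $x$. Expanding $2\rho(1-\rho)N\sqrt{1-y_1}\sqrt{1-y_2}$ with $y_2=1/(\rho^2 N)$ and combining with the explicit ``$-1$'' gives a constant-in-$x$ shift of $-1/\rho+O(1/N)$, which dominates $x^3/N^2$ whenever $x=o(N^{2/3})$. So this contribution cannot literally be absorbed into a term that is $\mathcal{O}((x/N)^3)$. This imprecision is, however, already present in the paper's own statement (its $\mathcal{O}$ is being used loosely), and it is harmless for the downstream use of the lemma: what is needed there is only the one-sided inequality $g(x)+f(v_N-\mathbf{x}^{\uparrow})\le N-h(x)/N$, and the extra $-1/\rho$ only strengthens that inequality. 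It would be cleaner to state the conclusion as $g(x)+f(v_N-\mathbf{x}^{\uparrow})\le N-h(x)/N$ directly, or to phrase the remainder as a strictly positive quantity bounded by $C(1+x^3/N^2)$, rather than asserting it is genuinely $\mathcal{O}((x/N)^3)$.
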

\begin{proof}
	The proof follows by plugging in the expression for $f(v_N)$ and doing a
	Taylor expansion. Note that $f(v_N)=N$.
\end{proof}

The main idea used in the proof of Theorem \ref{exit_time} comes from the
above lemma. Lemma \ref{expecatation_x^2/n} roughly shows that
$\mathbb{E}^{\rho}\left[G^1_{\mathrm{stat}}(\mathbf{x})+G(\mathbf{x}^{\uparrow},v_N)\right]$
is about $N-C\frac{x^2}{N}$. Note that
$\mathbb{E}^{\rho}\left[G^1_{\mathrm{stat}}(\mathbf{x})+G(\mathbf{x}^{\uparrow},v_N)\right]$
is the expected weight of the best up-right path from $\mathbf{0}$ to
$v_N$ which exits at $\mathbf{x}$. On the other hand, by comparison
with the point-to-point estimates for exponential LPP, we already know
that $\mathbb{E}^{\rho}[G^1_{\mathrm{stat}}(v_N)]$ is at least $N$, and we also have
upper bound estimates for the lower tail. Due to the discrepancy between the
means and the good concentration estimates about
their respective means (Proposition \ref{point_to_point} and Proposition
\ref{point_to_line}), it is
unlikely that we have
$G^1_{\mathrm{stat}}(\mathbf{x})+G(\mathbf{x}^{\uparrow},v_N)\geq
G^1_{\mathrm{stat}}(v_N)$. When done formally, this gives us an upper bound of the
probability of the exit time being exactly $\mathbf{x}$. We will finally do
it for a
range of $x$ simultaneously which adds technicality, but the basic idea is
still the same. 

One ingredient that we will use is the following point-to-point moderate
deviation estimate for exponential LPP coming from \cite{LR10}:
\begin{proposition}
	\label{point_to_point}
	For each $\psi>1$, there exist $C,c>0$ depending on $\psi$ such that
	for all $m,n$ sufficiently large with $\psi^{-1}<\frac{m}{n}<\psi$ and
	all $y>0$, we have the following:
	\begin{enumerate}
		\item $\mathbb{P}(G(\mathbf{0},(m,n))-(\sqrt{m}+\sqrt{n})^2\geq yn^{1/3})\leq
			Ce^{-c\min\{y^{3/2},yn^{1/3}\}}$.
		\item $\mathbb{P}(G(\mathbf{0},(m,n))-(\sqrt{m}+\sqrt{n})^2\leq -yn^{1/3})\leq
			Ce^{-cy^3}$.
	\end{enumerate}
\end{proposition}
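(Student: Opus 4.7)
The plan is to reduce the problem to a non-asymptotic moderate-deviation bound for the largest eigenvalue of a Laguerre Unitary Ensemble. Concretely, if $X$ is an $n\times m$ matrix of i.i.d.\ complex Gaussian entries, then Johansson's identity gives that the largest eigenvalue $\lambda_{\max}$ of $XX^{*}$ has the same distribution as $G(\mathbf{0},(m,n))$. Under this identification, $(\sqrt{m}+\sqrt{n})^{2}$ is the right edge of the Marchenko--Pastur law and $n^{1/3}$ is the Tracy--Widom fluctuation scale, so both parts of the proposition become uniform-in-$m/n$ tail bounds for $\lambda_{\max}$ about this edge.

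For the upper tail I would use a Laplace-transform / Chernoff route that bypasses any detailed Fredholm or kernel asymptotics: write
\begin{equation*}
\mathbb{P}\bigl(\lambda_{\max}\geq a\bigr)\leq e^{-\theta a}\,\mathbb{E}\bigl[\mathrm{tr}\,e^{\theta XX^{*}}\bigr]
\end{equation*}
and optimize in $\theta>0$. The trace exponential is computable in closed form through the Laguerre kernel and admits a sharp saddle-point expansion about the spectral edge; choosing $\theta$ of order $n^{-1/3}$ yields the Tracy--Widom right-tail exponent $e^{-cy^{3/2}}$ in the moderate-deviation window $y\ll n^{2/3}$, while for $y\gtrsim n^{2/3}$ a cruder estimate with $\theta$ of order one produces the true large-deviation decay $e^{-cyn^{1/3}}$. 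Taking the better of the two exponents gives the $\min\{y^{3/2},yn^{1/3}\}$ in the statement.

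The lower tail is the genuine obstacle, since $\lambda_{\max}$ being atypically small forces essentially all eigenvalues to be displaced and no single-point or union-bound argument works. I would follow \cite{LR10} and invoke the Dumitriu--Edelman tridiagonal model for $XX^{*}$, which realizes $\lambda_{\max}$ as the top eigenvalue of an explicit random Jacobi matrix whose entries are independent $\chi$ and Gaussian variables. A Rayleigh-quotient lower bound using test vectors supported in a thin layer of width $\sim y^{1/2}n^{1/3}$ near the edge, combined with Gaussian concentration for the independent entries, yields the cubic-exponent bound $\mathbb{P}(\lambda_{\max}\leq(\sqrt{m}+\sqrt{n})^{2}-yn^{1/3})\leq Ce^{-cy^{3}}$. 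The main technical difficulty throughout is \emph{uniformity}: the constants must depend only on the aspect-ratio bound $\psi$, which forces one to work with the moderate-deviation regime non-asymptotically rather than passing through the $n\to\infty$ Tracy--Widom limit. This is precisely what the tridiagonal representation enables on the lower side and what the explicit Laguerre orthogonal-polynomial bounds enable on the upper side.
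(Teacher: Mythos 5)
This proposition is not proved in the paper at all: the author states it as a black-box input, citing Theorem~4.1 of \cite{BGZ19} for this formulation, which in turn traces back to \cite{LR10}. So there is no ``paper's own proof'' to compare against; you are effectively reconstructing the argument from the cited references, which is a legitimate but distinct exercise.

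Your sketch does capture the essential route in that literature. Johansson's identification of $G(\mathbf{0},(m,n))$ with the top eigenvalue of an LUE matrix is indeed the starting point; a Chernoff/Laplace-transform bound on $\mathrm{tr}\,e^{\theta XX^*}$ via the determinantal structure gives the right tail with the two regimes $y\lesssim n^{2/3}$ (exponent $y^{3/2}$) and $y\gtrsim n^{2/3}$ (exponent $yn^{1/3}$); and the left tail goes through the Dumitriu--Edelman tridiagonal model with a Rayleigh-quotient test vector. One quantitative point that would not survive a careful write-up, however, is the test-vector width you propose for the lower tail. In stochastic-Airy scaling a test function of width $L$ has deterministic Rayleigh quotient $\approx -c'L - c/L^2$ below the edge and noise variance $\approx 1/L$, so $\mathbb{P}(\text{Rayleigh}\leq -y)\lesssim \exp\bigl(-\tfrac{1}{2}L(y-c'L)^2\bigr)$; optimizing gives $L\asymp y$, i.e.\ a matrix-index window of width $\sim yn^{1/3}$, not $y^{1/2}n^{1/3}$. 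With your narrower window the exponent degrades to order $y^{5/2}$, short of the required $y^3$. The approach is otherwise sound, and the emphasis on obtaining constants uniform in the aspect ratio $\psi$ is exactly the right concern; that is precisely why one works directly with the tridiagonal model rather than passing through the Tracy--Widom limit.
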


For convenience, we have taken the above specific version of the result from
Theorem 4.1 in \cite{BGZ19}. We will also need a lower bound estimate for the upper tail for
point-to-point exponential LPP. It is obtained from Theorem 4 in
\cite{LR10} in the same way as Proposition \ref{point_to_point} is obtained
from using
results from \cite{LR10} as described in \cite{BGZ19}. Note that Theorem 4 in
\cite{LR10} is stated for Hermite ensembles, but as mentioned in \cite{LR10}, the same
technique works for the Laguerre case.

\begin{proposition}
	\label{import_upper_tail_lower_bound}
		For each $\psi>1$, there exist $C,c>0$ depending on $\psi$ such that
	for all $m,n$ sufficiently large with $\psi^{-1}<\frac{m}{n}<\psi$ and
	all $y>0$, we have the following:
	\begin{displaymath}
		\mathbb{P}(G(\mathbf{0},(m,n))-(\sqrt{m}+\sqrt{n})^2\geq
		yn^{1/3})\geq
		Ce^{-cy^{3/2}}.
	\end{displaymath}
\end{proposition}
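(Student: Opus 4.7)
The plan is to reduce Proposition \ref{import_upper_tail_lower_bound} to a random matrix moderate deviation statement via Johansson's identification, and then invoke the Laguerre analogue of Theorem 4 of \cite{LR10}, exactly paralleling how Proposition \ref{point_to_point} is extracted in \cite{BGZ19}.

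First, I would recall Johansson's identity: for $m \leq n$, the passage time $G(\mathbf{0},(m,n))$ is equal in distribution to the largest eigenvalue $\lambda_{\max}$ of a complex Wishart (Laguerre unitary) ensemble with parameters $(m,n)$. Under this identification the macroscopic edge sits at $(\sqrt{m}+\sqrt{n})^2$, and the canonical edge fluctuation scale is $\sigma_{m,n} = (\sqrt{m}+\sqrt{n})(m^{-1/2}+n^{-1/2})^{1/3}$. In the ratio regime $\psi^{-1} < m/n < \psi$ both $\sigma_{m,n}$ and $n^{1/3}$ are comparable up to constants depending only on $\psi$, so the scaling in the proposition is natural. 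This is exactly the reduction used to derive the upper bound in Proposition \ref{point_to_point} from \cite{LR10}, so the parameter bookkeeping has already been done for us.

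Next, I would invoke the Laguerre version of \cite[Thm.~4]{LR10}. In the Hermite setting, that theorem shows $\mathbb{P}(\lambda_{\max}(\mathrm{GUE}_n) \geq 2\sqrt{n} + y n^{-1/6}) \geq C e^{-cy^{3/2}}$ uniformly in the moderate deviation range. The authors of \cite{LR10} explicitly remark that their method, which is based on the contour integral representation of the orthogonal polynomial kernel and a steepest descent analysis at the soft edge, transfers to the Laguerre ensemble with only notational changes. Applied in our setting this gives
\begin{equation*}
    \mathbb{P}\bigl(\lambda_{\max} \geq (\sqrt{m}+\sqrt{n})^2 + y\,\sigma_{m,n}\bigr) \geq C e^{-cy^{3/2}}
\end{equation*}
for $y$ in the moderate deviation window, with constants uniform in $m/n \in (\psi^{-1},\psi)$. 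Converting back via Johansson and absorbing the bounded ratio $\sigma_{m,n}/n^{1/3}$ into the constants $C,c$ yields the proposition.

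The only nontrivial technical step is justifying that the Laguerre adaptation of \cite[Thm.~4]{LR10} holds with constants uniform in the ratio $m/n$ over a compact subset of $(0,\infty)$; this is what lets us state the proposition as a single inequality in $y$ with constants depending only on $\psi$. This uniformity is implicit in the steepest descent analysis at the soft edge of the Marchenko-Pastur law, since the critical point and contour deformation are smooth in $m/n$ away from $0$ and $\infty$. Once this is acknowledged, the rest of the argument is a direct translation of the chain of reductions already carried out in \cite{BGZ19} for the upper bound of the same tail.
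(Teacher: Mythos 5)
Your proposal is correct and follows the same route the paper takes: the paper explicitly states that Proposition \ref{import_upper_tail_lower_bound} is obtained from Theorem 4 of \cite{LR10} (using Johansson's Laguerre identification and the remark in \cite{LR10} that the Hermite steepest-descent argument carries over to the Laguerre case) via the same chain of reductions used for Proposition \ref{point_to_point} in \cite{BGZ19}. Your write-up is, if anything, slightly more explicit about the scaling factor $\sigma_{m,n}$ and the uniformity in the ratio $m/n$, but the underlying argument and the sources invoked are identical.
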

Apart from using the point-to-point moderate deviation estimate, we will
also be using the point-to-line estimate:

\begin{proposition}
	\label{point_to_line}
Fix a $\rho\in (0,1)$ and $\delta_2>0$. Consider a line segment $\mathbb{L}_m(N)$ on $\{y=0\}$ with midpoint
	$(m(1-\rho)^2 N,0)$ and length $2N^{2/3}$. For each $\psi\in (0,1)$, there exists $C,c>0$
	(depending only on $\rho,\psi,\delta_2$) such that for all $|m|<\psi N^{1/3}$ and
	$y$ satisfying
	$\delta_2 N^{2/3}>y>0$, we have
\begin{displaymath}
	 \mathbb{P}\left(\max_{x\in
	 \mathbb{L}_m(n)}\left\{G(\mathbf{x}^{\uparrow},v_N)-\mathbb{E}[G(\mathbf{x}^{\uparrow},v_N)]\right\}>y
	 N^{1/3}\right) \leq Ce^{-cy^{3/2}}.
\end{displaymath}
\end{proposition}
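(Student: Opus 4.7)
The plan is to deduce the point-to-line estimate from the point-to-point estimate of Proposition \ref{point_to_point}(1) by a chaining argument over a discrete grid on $\mathbb{L}_m(N)$. By choosing $C$ large enough, one may assume $y \geq y_0$ for a fixed constant $y_0$, since otherwise the bound holds trivially.

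The basic setup is as follows: partition $\mathbb{L}_m(N)$ into $K$ equal sub-intervals $I_1, \ldots, I_K$ with centers $\mathbf{x}_1, \ldots, \mathbf{x}_K$, and decompose the event of interest into a \emph{grid event} $\{\exists\, i :\ G(\mathbf{x}_i^{\uparrow}, v_N) - \mathbb{E}[G(\mathbf{x}_i^{\uparrow}, v_N)] > (y/2)N^{1/3}\}$ and an \emph{oscillation event} on whether the centered passage time varies by more than $(y/2)N^{1/3}$ within some $I_i$. For the grid event, since $|m|<\psi N^{1/3}$ and the sub-intervals lie within $O(N^{2/3})$ of $(m(1-\rho)^2 N, 0)$, the ratio of the coordinates of $v_N - \mathbf{x}_i^{\uparrow}$ remains bounded uniformly in $N$; Proposition \ref{point_to_point}(1), after using Lemma \ref{expecatation_x^2/n} to exchange $\mathbb{E}[G]$ for $f(v_N - \mathbf{x}_i^{\uparrow})$ (with an $O(N^{1/3})$ error absorbed by adjusting constants), gives a per-point probability of $Ce^{-cy^{3/2}}$, and a union bound over $K$ centers multiplies this by $K$. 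Throughout this step the condition $y < \delta_2 N^{2/3}$ ensures that we remain in the regime where the minimum in Proposition \ref{point_to_point}(1) equals $y^{3/2}$.

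The principal obstacle is the oscillation event together with the accompanying choice of $K$. The deterministic drift in $\mathbb{E}[G(\cdot, v_N)]$ over a sub-interval of length $L = 2N^{2/3}/K$ is $O(L/(1-\rho))$, and we need this to be much smaller than $yN^{1/3}$; the random oscillation at transversal scale $L$ is expected to be $O(L^{1/2})$ by KPZ scaling. Balancing these against the union-bound factor $K$ naturally suggests a dyadic chaining scheme: at scale $k$, place $2^k$ grid points, control pairwise increments of the centered passage time between adjacent scale-$k$ points via Proposition \ref{point_to_point}(1) applied with a threshold $\Delta_k N^{1/3}$, and choose the $\Delta_k$ decreasing geometrically so that $\sum_k \Delta_k \leq y/2$. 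Summing the resulting geometric series in the exponent yields the claimed bound with optimal exponent $y^{3/2}$. Alternatively, the estimate is also a consequence of the moderate deviation bounds for the flat-initial-condition point-to-line passage time (related to the Laguerre Orthogonal Ensemble; see \cite{BGHK19,BSS19}), by accounting for the smooth variation of $\mathbb{E}[G(\cdot, v_N)]$ along $\mathbb{L}_m(N)$.
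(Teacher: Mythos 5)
Your overall strategy—grid plus oscillation control via multiscale chaining—matches in spirit what the cited reference (BSS14, Theorem~10.5) does, but the paper itself does not give a from-scratch proof here: it simply quotes that theorem and notes that substituting the optimal point-to-point tail exponent $y^{3/2}$ for the suboptimal one used there yields the stated bound. Your blind proposal, on the other hand, attempts a self-contained chaining argument, and it has a genuine gap in the one step that actually carries the weight.

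The problem is your treatment of the oscillation event. You correctly observe that the relevant random oscillation at transversal scale $L$ should be $O(L^{1/2})$ by KPZ scaling, but you then claim to control ``pairwise increments \ldots\ via Proposition~\ref{point_to_point}(1).'' Proposition~\ref{point_to_point}(1) is a single-point concentration bound: it controls $G(\mathbf{x}^\uparrow, v_N) - \mathbb{E}[G(\mathbf{x}^\uparrow, v_N)]$ at scale $N^{1/3}$, and the only way to apply it to a difference $G(\mathbf{x}_j^\uparrow, v_N) - G(\mathbf{x}_{j+1}^\uparrow, v_N)$ is the triangle inequality, which yields concentration at scale $N^{1/3}$ \emph{regardless} of how close $\mathbf{x}_j$ and $\mathbf{x}_{j+1}$ are. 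The $L^{1/2}$ gain comes entirely from the correlation between the two passage times, which Proposition~\ref{point_to_point} says nothing about. With the increment probability stuck at $e^{-c\Delta_k^{3/2}}$ independent of the grid scale, the chaining sum $\sum_k 2^k e^{-c\Delta_k^{3/2}}$ with $\Delta_k$ decreasing geometrically (so that $\sum_k \Delta_k \leq y/2$) diverges: the exponent $\Delta_k^{3/2}\to 0$ while the cardinality factor $2^k$ grows. If instead you choose $\Delta_k \gtrsim (k+y^{3/2})^{2/3}$ to make the sum converge, then $\sum_k \Delta_k$ grows like $K^{5/3}$ in the number of scales $K$, and one cannot push the grid down to spacing $o(N^{2/3})$ while keeping $\sum_k \Delta_k \lesssim y$ uniformly for bounded $y$. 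So the scheme as written only works in the regime $y \gtrsim (\log N)^{\Theta(1)}$ and fails for the fixed-$y$, $N\to\infty$ regime that the proposition must cover.

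What is actually needed (and what underlies the cited BSS14 argument) is a deterministic path-routing/ordering inequality of the form $G(\mathbf{x}^\uparrow, v_N) \le G(\mathbf{x}_j^\uparrow, v_N) - \sum_{i=x_j}^{x-1}\omega_{(i,1)}$ for $x_j \le x$, which reduces the oscillation within a block to the fluctuation of the boundary random walk rather than to another full-scale passage time; combined with control of the deterministic drift $\mathbb{E}[G(\mathbf{x}_j^\uparrow,v_N)] - \mathbb{E}[G(\mathbf{x}^\uparrow,v_N)]$ and a union bound over a carefully chosen discretization, this is what produces the correct tail. This ingredient does not appear in your sketch, and without it the chaining collapses. (Your alternative suggestion via flat-initial-condition estimates runs into a related difficulty: the flat point-to-line passage time starts from the antidiagonal, not from $\{y=1\}$, and matching the drift of $\mathbb{E}[G(\cdot,v_N)]$ along $\mathbb{L}_m(N)$ against that comparison loses a term of order $N^{2/3}$ unless $\rho=1/2$; ``accounting for the smooth variation'' is exactly where the work lies, and it is not routine.)
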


Proposition \ref{point_to_line} is a special case of Theorem 10.5 in
\cite{BSS14} where it is written for the more general case of parallelograms.
Note that Theorem 10.5 in \cite{BSS14} gives an upper bound of $Ce^{-cy}$
but an inspection of the proof reveals that the exponent $y$ comes from
using a suboptimal point-to-point upper bound, but using the same argument
with the optimal point-to-point upper bound as in Proposition
\ref{point_to_point} gives the correct exponent of
$y^{3/2}$. Also, for Theorem 10.5 in \cite{BSS14}, it a-priori appears that
the vertices of the parallelogram are placed in a manner not resembling our
setting, but an inspection of the proof
shows that the vertices can be situated on any lines as long as the slopes
of the parallelogram edges of linear length are bounded away from the
coordinate directions. 

In the last section, we will need the following lower bound of the lower tail
probability of the point-to-line passage time in exponential LPP:
\begin{proposition}
	\label{lower_tail_lower_bound_ingredient}
	For any constant $\delta_2\in (0,4)$, there exist constants $c>0$,
	$n_0\in \mathbb{N}$ depending on $\delta_2$ such that for
all $n>n_0$ and $y\in(1,\delta_2 n^{2/3})$, we have
	\begin{displaymath}
		\mathbb{P}\left(\max_{t\in\mathbb{Z}}\left\{ G\left( (t,-t),(n,n) \right)
		\right\}\leq 4n-
	yn^{1/3}\right)\geq e^{-cy^3}.
	\end{displaymath}
\end{proposition}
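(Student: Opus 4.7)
The plan is to reduce the estimate to a lower-tail bound on the largest eigenvalue of a $\beta=1$ random matrix ensemble, via the known random matrix representation of the flat-initial-condition exponential LPP. By the Pfaffian Schur process machinery (developed by Baik-Rains, Borodin-Ferrari-Sasamoto, and Sasamoto-Imamura), the distribution of $\max_{t\in\mathbb{Z}} G((t,-t),(n,n))$ coincides, up to a deterministic shift, with that of the top eigenvalue $\lambda_1$ of an $n\times n$ random matrix in the $\beta=1$ Laguerre Orthogonal Ensemble universality class. Under this identification, the event $\{\max_{t\in\mathbb{Z}} G((t,-t),(n,n)) \leq 4n - yn^{1/3}\}$ translates to the eigenvalue event $\{\lambda_1 \leq 4n - yn^{1/3}\}$.

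Next I would apply the lower-tail estimate $\mathbb{P}(\lambda_1 \leq 4n - yn^{1/3}) \geq e^{-cy^3}$, valid uniformly for $y\in(1,\delta_2 n^{2/3})$. This is the $\beta=1$ analogue of the Ledoux-Rider LUE bound \cite{LR10} that underlies Proposition \ref{import_upper_tail_lower_bound}. The proof goes through the explicit joint density of eigenvalues, $p(\lambda)\propto \prod_{i<j}|\lambda_i-\lambda_j|\prod_i \lambda_i^{-1/2}e^{-\lambda_i/2}$, by integrating over the region $\{\lambda_i \leq 4n - yn^{1/3} : 1\leq i\leq n\}$ and applying a Coulomb gas energy estimate near the soft edge of the Marchenko-Pastur-type equilibrium measure. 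Both the moderate-deviation regime (small $y$, where one recovers the cubic exponent of the GOE Tracy-Widom lower tail) and the large-deviation regime (where $y$ approaches $\delta_2 n^{2/3}$, so that the eigenvalues are pushed away from the edge by a constant fraction of $n$) yield the same $y^3$ scaling from the equilibrium measure analysis.

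A secondary direct attempt through a change-of-measure argument -- tilting the exponential weights from rate $1$ to rate $\lambda = 1 + yn^{-2/3}/4$ on the $O(n^2)$ vertices in the relevant region, so that the tilted mean passage time is $4n - yn^{1/3}$ -- yields a cost of order $n^2(\lambda-1)^2 \sim y^2 n^{2/3}$, which is far too large to produce the desired $e^{-cy^3}$ bound. This mismatch reflects the fact that the correct exponent is genuinely a consequence of the joint log-gas structure of the Pfaffian point process and not of any single path statistic, which is why the random matrix input is essential.

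The principal obstacle is the precise finite-$n$ distributional identification between the exponential LPP passage time and the random matrix eigenvalue: while the Tracy-Widom GOE limit is classical, establishing the exact finite-$n$ identity requires careful combinatorial bookkeeping in the Pfaffian Schur process framework. A secondary technical point is verifying that the lower-tail estimate holds uniformly for $y$ across the entire range $(1,\delta_2 n^{2/3})$, bridging the moderate-deviation and large-deviation regimes; this has been carried out in the $\beta$-ensemble lower-deviation literature such as \cite{BGHK19}, so the estimate can be invoked once the random-matrix identification is in place.
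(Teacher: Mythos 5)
Your proposal takes essentially the same route as the paper: the paper simply cites this proposition from Theorem 1.2 of \cite{BGHK19} (extending from $\delta_2 = 1$ to general $\delta_2 \in (0,4)$ via Theorem 2 and the remarks in that reference), and your sketch -- the finite-$n$ Laguerre orthogonal $\beta$-ensemble identification of $\max_t G((t,-t),(n,n))$ followed by a Coulomb-gas lower-tail lower bound that holds uniformly across the moderate- and large-deviation scales -- is precisely the mechanism established inside that reference, so the content is the same modulo minor imprecisions (the exact Laguerre parameters and the scaling rather than shift in the identity). Your side observation that a direct change-of-measure tilting produces a cost of order $y^2 n^{2/3}$ rather than $y^3$, so that the log-gas structure genuinely cannot be bypassed, is also correct and is exactly why the paper outsources this step to \cite{BGHK19} rather than proving it directly.
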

The above proposition comes from Theorem 1.2 from \cite{BGHK19}. In \cite{BGHK19},
the result is stated for $\delta_2=1$, but any $\delta_2\in(0,4)$ works by
Theorem 2 along with the remarks at the end of the first section therein.

\section{Upper bound for the exit time}
We now proceed with the proof of Theorem \ref{exit_time}. To begin, we bound
the probability of the exit time lying in the interval
$[rN^{2/3},(r+1)N^{2/3}]$ where $r$ is an integer and $-\gamma_2 N^{2/3}<r<\gamma_1 N^{2/3}$ for
some fixed $\gamma_1,\gamma_2$ satisfying
$0<\gamma_1<(1-\rho)^2$ and $0<\gamma_2<\rho^2$. We first aim to show the following intermediate result in the
proof of Theorem \ref{exit_time}:
\begin{proposition}
	\label{r,r+1}
	Fix positive constants $\gamma_1,\gamma_2$ such that $(1-\rho)^2
	>\gamma_1>0$ and $\rho^2>\gamma_2>0$.
	There exist constants $C,c,N_0$ depending on $\gamma_1,\gamma_2,\rho$ such that for
	all integers $r$ with $-\gamma_2 N^{1/3}<r<\gamma_1 N^{1/3}$ and $N\geq N_0$, we have
	\begin{displaymath}
			\mathbb{P}^{\rho} (Z^{\mathbf{0}\rightarrow v_N}\in
			[rN^{2/3},(r+1)N^{2/3}])\leq Ce^{-c|r|^3}.
	\end{displaymath}
\end{proposition}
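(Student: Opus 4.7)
The plan is to compare $A_r := \{Z^{\mathbf{0} \to v_N} \in [rN^{2/3}, (r+1)N^{2/3}]\}$ against a trivial deterministic lower bound for $G^1_{\mathrm{stat}}(v_N)$, using Lemma \ref{expecatation_x^2/n} to quantify the concavity gap between the means. By diagonal symmetry (swapping $\rho \leftrightarrow 1-\rho$) it suffices to take $r > 0$. Writing $I := [rN^{2/3}, (r+1)N^{2/3}]$, on $A_r$ the stationary geodesic exits the $x$-axis at some point in $I$, which forces
\begin{displaymath}
\max_{x \in I}\bigl\{G^1_{\mathrm{stat}}(\mathbf{x}) + G(\mathbf{x}^{\uparrow}, v_N)\bigr\} \;\geq\; G^1_{\mathrm{stat}}(v_N) \;\geq\; G(\mathrm{e}_1 + \mathrm{e}_2, v_N),
\end{displaymath}
the right-hand bound coming from the fixed path $\mathbf{0} \to \mathrm{e}_1 \to \mathrm{e}_1 + \mathrm{e}_2$ followed by a bulk point-to-point geodesic (noting $\omega'_{\mathrm{e}_1} \geq 0$).

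Next I would center each random term against its shape-function mean and split the max subadditively. The deterministic residual $\max_{x \in I}\{g(x) + \mathbb{E}[G(\mathbf{x}^{\uparrow}, v_N)]\}$ is controlled via $\mathbb{E}[G(\mathbf{x}^{\uparrow}, v_N)] \leq f(v_N - \mathbf{x}^{\uparrow})$ together with Lemma \ref{expecatation_x^2/n}: since $g(x) + f(v_N - \mathbf{x}^{\uparrow})$ is monotone decreasing on $I$ and its left-endpoint value equals $N - h(rN^{2/3})/N - \Theta(r^3) = N - \Theta(r^2 N^{1/3})$, while $\mathbb{E}[G(\mathrm{e}_1 + \mathrm{e}_2, v_N)] = N - o(N^{1/3})$, the event $A_r$ forces at least one of the three fluctuation events below at threshold $c_1 r^2 N^{1/3}$ (for a constant $c_1 > 0$ depending on $\rho$):
\begin{enumerate}
\item $\max_{x \in I}\bigl\{G^1_{\mathrm{stat}}(\mathbf{x}) - g(x)\bigr\} \geq c_1 r^2 N^{1/3}$;
\item $\max_{x \in I}\bigl\{G(\mathbf{x}^{\uparrow}, v_N) - \mathbb{E}[G(\mathbf{x}^{\uparrow}, v_N)]\bigr\} \geq c_1 r^2 N^{1/3}$;
\item $\mathbb{E}[G(\mathrm{e}_1 + \mathrm{e}_2, v_N)] - G(\mathrm{e}_1 + \mathrm{e}_2, v_N) \geq c_1 r^2 N^{1/3}$.
\end{enumerate}

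Each admits a matching $Ce^{-cr^3}$ tail bound. Event (1) is a maximal inequality (Etemadi plus Bernstein) for a centered random walk of i.i.d.\ $\mathrm{exp}(1-\rho)$ increments of total length $O(rN^{2/3})$: the Gaussian scale is $\Theta(r^{1/2} N^{1/3})$, so a deviation at level $r^2 N^{1/3}$ is $\Theta(r^{3/2})$ standard deviations, yielding $e^{-cr^3}$, and the constraint $r < \gamma_1 N^{1/3}$ keeps us below the Gaussian/exponential crossover. Event (2) is exactly the content of Proposition \ref{point_to_line} applied with $y = c_1 r^2$, whose hypotheses $|m| < \psi N^{1/3}$ and $y < \delta_2 N^{2/3}$ are both met for $\gamma_1$ small. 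Event (3) is controlled by Proposition \ref{point_to_point}(2) at $y = c_1 r^2$, which in fact yields the stronger $Ce^{-cr^6}$. The main bookkeeping obstacle is to absorb the cubic Taylor remainder $N\cdot\mathcal{O}((x/N)^3) = O(r^3)$ from Lemma \ref{expecatation_x^2/n} into the leading gap $\Theta(r^2 N^{1/3})$; since the ratio is $r/N^{1/3} < \gamma_1$, this is arranged by choosing $\gamma_1$ (and symmetrically $\gamma_2$) sufficiently small in terms of $\rho$, which is precisely the restriction imposed on $r$ in the statement.
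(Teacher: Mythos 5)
Your proposal is correct and follows essentially the same strategy as the paper: on $A_r$ one has $\max_{x\in I}\{G^1_{\mathrm{stat}}(\mathbf{x})+G(\mathbf{x}^\uparrow,v_N)\}\geq G^1_{\mathrm{stat}}(v_N)$, the paper bounds $G^1_{\mathrm{stat}}(v_N)$ from below by $G((1,1),v_N)$ exactly as you do, and then both arguments split into three centered fluctuation events (boundary random walk, point-to-line, and point-to-point lower tail) with the concavity gap $\Theta(r^2N^{1/3})$ from Lemma \ref{expecatation_x^2/n} as the threshold. The only real difference is the tool for the boundary random walk: the paper runs Doob's maximal inequality on the exponential martingale $\exp(\lambda M_n)$ and controls the moment generating function directly (Lemmas \ref{mgf_converges}--\ref{mgf_converges1}), whereas you propose Etemadi's maximal inequality combined with Bernstein. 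Both give the cubic exponent $e^{-cr^3}$ in the same Gaussian regime $r\lesssim N^{1/3}$, so this is a cosmetic rather than structural departure. One small imprecision: $\mathbb{E}[G(\mathrm{e}_1+\mathrm{e}_2,v_N)]-N$ is $O(N^{1/3})$, not $o(N^{1/3})$; this does not affect the argument since that additive error is absorbed into the $\Theta(r^2N^{1/3})$ gap for $r$ above a threshold $r_0$, with small $r$ handled by adjusting constants, which is also how the paper proceeds.
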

We split the above probability into two parts by using a union bound as
follows:
\begin{align}
	\mathbb{P}^{\rho} (Z^{\mathbf{0}\rightarrow v_N}\in
	[rN^{2/3},(r+1)N^{2/3}])&=\mathbb{P}^{\rho}\left(\max_{x\in
	[rN^{2/3},(r+1)N^{2/3}]}\{G^1_{\mathrm{stat}}(\mathbf{x})+G(\mathbf{x}^{\uparrow},v_N)\}\geq G^1_{\mathrm{stat}}(v_N)\right)\nonumber\\
	&\leq \mathbb{P}^{\rho}(G^1_{\mathrm{stat}}(v_N)\leq\alpha)+\mathbb{P}^{\rho}\left( \max_{x\in
	[rN^{2/3},(r+1)N^{2/3}]}\{G^1_{\mathrm{stat}}(\mathbf{x})+G(\mathbf{x}^{\uparrow},v_N)\}\geq\alpha
	\right).\label{split_alpha}
\end{align}
The above works for any $\alpha$ but to get good estimates, we need to choose
$\alpha$ such that both the terms in the above expression are small. That is,
$\alpha$ should be far enough from the means of both $G^1_{\mathrm{stat}}(v_N)$ and $\max_{x\in
	[rN^{2/3},(r+1)N^{2/3}]}\{G^1_{\mathrm{stat}}(\mathbf{x})+G(\mathbf{x}^{\uparrow},v_N)\}$.
	Combining Lemma \ref{expecatation_x^2/n} with the above intuition, we
	will set
	$\alpha$ to be about $N-\frac{h(rN^{2/3})}{2N}$. Note that
	$\frac{x^2}{N}=r^2N^{1/3}$ if $x=rN^{2/3}$. We now bound each of the
	terms in $\eqref{split_alpha}$. To bound the first term, we use
	Proposition \ref{point_to_point}.
	\begin{lemma}
		\label{first_term}
		There exists $N_0$ such that for some positive constants $C,c$
		depending on $\rho$, and for all $N>N_0$ and all $r$, we have
		that

		\begin{displaymath}
			\mathbb{P}^{\rho}\left(G^1_{\mathrm{stat}}(v_N)\leq N-
			r^2N^{1/3}\right)\leq
		Ce^{-c|r|^6}.
		\end{displaymath}
	\end{lemma}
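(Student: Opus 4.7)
The plan is to lower bound $G^1_{\mathrm{stat}}(v_N)$ by the step-initial-condition passage time $G(\mathbf{0}, v_N)$ (up to a single vertex weight at the origin) and then apply Proposition \ref{point_to_point}(2). Since $\rho, 1-\rho \in (0,1)$, both boundary rates are strictly less than one, so the exp$(1-\rho)$ and exp$(\rho)$ weights on the axes are stochastically larger than an exp$(1)$ weight. I would therefore construct a monotone coupling of $\omega'$ (as in Definition \ref{first_representation}) with an independent exp$(1)$ field $\omega$ (as in Definition \ref{step}) so that $\omega'_v \geq \omega_v$ for every $v \in \mathbb{Z}_{\geq 0}^2 \setminus \{\mathbf{0}\}$. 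At the origin the inequality fails in the opposite direction, since $\omega'_{\mathbf{0}} = 0$ while $\omega_{\mathbf{0}} \sim \mathrm{exp}(1)$.

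Every up-right path $\gamma$ from $\mathbf{0}$ to $v_N$ passes through the origin exactly once, so on the coupling space
\begin{equation*}
	l'(\gamma) = \sum_{v \in \gamma,\, v \neq \mathbf{0}} \omega'_v \;\geq\; \sum_{v \in \gamma,\, v \neq \mathbf{0}} \omega_v \;=\; l(\gamma) - \omega_{\mathbf{0}},
\end{equation*}
and taking maxima over $\gamma$ yields $G^1_{\mathrm{stat}}(v_N) \geq G(\mathbf{0}, v_N) - \omega_{\mathbf{0}}$. Combining this with a union bound gives
\begin{equation*}
	\mathbb{P}^{\rho}\!\left( G^1_{\mathrm{stat}}(v_N) \leq N - r^2 N^{1/3} \right) \;\leq\; \mathbb{P}\!\left( G(\mathbf{0}, v_N) - N \leq -\tfrac{1}{2} r^2 N^{1/3} \right) + \mathbb{P}\!\left( \omega_{\mathbf{0}} \geq \tfrac{1}{2} r^2 N^{1/3} \right).
\end{equation*}
Since $f(v_N) = N$, the first term is controlled by Proposition \ref{point_to_point}(2) applied with $y = r^2 /(2\rho^{2/3})$ so that $y (\rho^2 N)^{1/3} = \tfrac{1}{2} r^2 N^{1/3}$, giving a bound of $Ce^{-cr^6}$. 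The second term is bounded by $e^{-r^2 N^{1/3}/2}$ using the standard exponential tail, which for $N$ large is negligible compared to $e^{-cr^6}$ in the stated range of $r$.

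Two minor edge cases need a sentence: for $|r|$ small the bound $Ce^{-c|r|^6}$ is automatic by choosing $C$ large, and for $r$ so large that $r^2 N^{1/3} > N$ the event is empty because $G^1_{\mathrm{stat}}(v_N) \geq 0$. There is no real obstacle here; the entire argument reduces to the elementary stochastic domination on the boundary (made possible by $\rho,1-\rho < 1$) together with the already-cited lower-tail estimate for the point-to-point model.
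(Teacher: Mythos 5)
Your coupling idea is sound and the inequality $G^1_{\mathrm{stat}}(v_N) \geq G(\mathbf{0}, v_N) - \omega_{\mathbf{0}}$ is correct, but the subsequent union bound introduces a genuine gap. You bound the second term by $\mathbb{P}(\omega_{\mathbf{0}} \geq \tfrac{1}{2}r^2 N^{1/3}) = e^{-r^2 N^{1/3}/2}$ and assert this is negligible compared to $e^{-cr^6}$ "in the stated range of $r$," but the stated range is all $r$ (with the event nonempty for $r \leq N^{1/3}$), and the inequality $r^2 N^{1/3}/2 \geq c r^6$ fails precisely when $r^4 \gtrsim N^{1/3}$. So for $r$ between roughly $N^{1/12}$ and $N^{1/3}$ your bound on the second term is of order $e^{-r^2 N^{1/3}}$, which can be much larger than $e^{-cr^6}$ (e.g.\ at $r \asymp N^{1/3}$ you get $e^{-cN}$ versus the required $e^{-cN^2}$). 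Reallocating the split between the two terms does not help: one needs simultaneously $s \geq c r^6$ (for the $\omega_{\mathbf{0}}$ tail) and $s \leq (1-c')r^2 N^{1/3}$ (for the $G$ tail), and no such $s$ exists once $r^4 \gg N^{1/3}$.

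The paper sidesteps $\omega_{\mathbf{0}}$ entirely by observing that $G^1_{\mathrm{stat}}(v_N) \geq G((1,1), v_N)$ pointwise (take the path $\mathbf{0} \to (1,0) \to (1,1) \to \cdots$ and use that boundary weights are nonnegative), and the law of $G((1,1),v_N)$ does not involve any boundary or origin weight, so Proposition \ref{point_to_point}(2) applies directly with no union bound and no extra term. You can repair your argument with one more step: note that $G(\mathbf{0}, v_N) - \omega_{\mathbf{0}} = \max\bigl(G((1,0),v_N), G((0,1),v_N)\bigr) \geq G((1,1),v_N)$, and then apply the point-to-point lower-tail estimate to $G((1,1),v_N)$. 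That restores the uniform $Ce^{-c|r|^6}$ bound and is, in substance, the paper's proof.
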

	\begin{proof}
		The proof for the same statement under $\mathbb{P}(\cdot)$ instead of
		$\mathbb{P}^{\rho}(\cdot)$ would be a direct application of
		Proposition \ref{point_to_point}. To do it for $\mathbb{P}^{\rho}(\cdot)$, note
		that we have 
		$\left\{G^1_{\mathrm{stat}}(v_N)\leq
		N-r^2 N^{1/3}\right\}\subseteq
		\left\{G( (1,1),v_N)\leq (N-1)-(r^2 N^{1/3}-1)\right\}$
		which gives that
		\begin{align*}
				\mathbb{P}^{\rho}\left(G^1_{\mathrm{stat}}(v_N)\leq N-r^2
				N^{1/3}\right)&\leq
				\mathbb{P}^{\rho}\left(G((1,1),v_N)\leq (N-1)-(r^2 N^{1/3}-1)\right)\\
				&=
				\mathbb{P}\left(G((1,1),v_N)\leq (N-1)-(r^2 N^{1/3}-1)\right)\\
				&\leq Ce^{-c|r|^6}.
		\end{align*}
		Here, we used Proposition \ref{point_to_point} in the last step.
	\end{proof}

	Note that Lemma \ref{expecatation_x^2/n} says that for $x>0$,
	$\frac{x}{1-\rho}+f(v_N-\mathbf{x}^{\uparrow})$ decreases as $x$ increases. To simplify the coming expressions, define
	$\underline{x}=rN^{2/3}$ and $\overline{x}=(r+1)N^{2/3}$ for $r\geq0$ and
	$\underline{x}=(r+1)N^{2/3}$ and $\overline{x}=rN^{2/3}$ for $r<0$. All the $\max$
	symbols from now till the end of the second section denote the maximum over
	the variable $x$ varying in the interval $[rN^{2/3},(r+1)N^{2/3}]$. For the second term in
	\eqref{split_alpha}, we have
	\begin{align}
		&\mathbb{P}^{\rho}\left(\max\{G^1_{\mathrm{stat}}(\mathbf{x})+G(\mathbf{x}^{\uparrow},v_N)\}\geq
		N-\frac{h(\underline{x})}{2N}\right)\nonumber\\
&\leq\mathbb{P}^{\rho}\left(\max\left\{\left(G^1_{\mathrm{stat}}(\mathbf{x})-g(x)\right)+\left(G(\mathbf{x}^{\uparrow},v_N)-f(v_N-\mathbf{x}^{\uparrow})\right)\right\}\geq
	N-\frac{h(\underline{x})}{2N}-g(\underline{x})-f(v_N-\underline{\mathbf{x}}^\uparrow)\right)\nonumber\\
	&=	\mathbb{P}^{\rho}\left(\max\left\{\left(G^1_{\mathrm{stat}}(\mathbf{x})-g(x)\right)+\left(G(\mathbf{x}^{\uparrow},v_N)-f(v_N-\mathbf{x}^{\uparrow})\right)\right\}\geq \frac{h(\underline{x})}{2N}+N\mathcal{O}\left(
	(\frac{\underline{x}}{N})^3\right)\right)\nonumber\\ 
	&\leq	\mathbb{P}^{\rho}\left(\max\left\{\left(G^1_{\mathrm{stat}}(\mathbf{x})-g(x)\right)+\left(G(\mathbf{x}^{\uparrow},v_N)-f(v_N-\mathbf{x}^{\uparrow})\right)\right\}\geq \frac{h(\underline{x})}{2N}\right)\nonumber\\  
	&\leq	\mathbb{P}^{\rho}\left(\max\left\{\left(G^1_{\mathrm{stat}}(\mathbf{x})-g(x)\right)\right\}\geq
\frac{h(\underline{x})}{4N}\right)+\mathbb{P}^{\rho}\left(\max\left\{\left((G(\mathbf{x}^{\uparrow},v_N)-f(v_N-\mathbf{x}^{\uparrow})\right)\right\}\geq\frac{h(\underline{x})}{4N}\right).
	\label{second_split}
	\end{align}
	Note that the second inequality follows because the $\mathcal{O}\left(
	(\frac{x}{N})^3\right)$ in Lemma
	\ref{expecatation_x^2/n} is strictly
	positive. We again bound each of the terms in \eqref{second_split}
	separately. The first term is handled
	in the following lemma:

	\begin{lemma}
		\label{first_term_again}
	For any fixed positive $\gamma_1,\gamma_2$ such that $\gamma_1<(1-\rho)^2$ and
	$\gamma_2<\rho^2$, there exist
	positive constants $N_0,C,c$
		depending on $\rho,\gamma_1,\gamma_2$ such that for all $N>N_0$ and
		all integers $r$
		such that
		$\gamma_1
		N^{1/3}>r>-\gamma_2 N^{1/3}$, we have
		\begin{displaymath}
			\mathbb{P}^{\rho}\left(\max_{x\in [rN^{2/3},(r+1)N^{2/3}]
			}\left\{G^1_{\mathrm{stat}}(\mathbf{x})-g(x)\right\}\geq
			\frac{h(\underline{x})}{4N}\right)\leq
			Ce^{-c|r|^3}.
		\end{displaymath}
	\end{lemma}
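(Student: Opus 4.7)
The plan is to exploit the fact that on the $x$-axis (and symmetrically the $y$-axis), the stationary model degenerates into a centered random walk with i.i.d.\ exponential increments, so the event in question is a moderate deviation event for such a random walk. By the symmetry $\rho \leftrightarrow 1-\rho$ between the two axes, I would assume without loss of generality that $r \geq 0$; and for $|r| \leq 1$ the bound is trivial since the probability is at most $1$ and the constant $C$ can be chosen large. So assume $r \geq 2$.

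By Definition \ref{first_representation}, for $x > 0$ the variable $G^1_{\mathrm{stat}}(\mathbf{x})$ is a sum of $x$ i.i.d.\ $\mathrm{exp}(1-\rho)$ weights along the $x$-axis, with mean $g(x)$. Set $S_x := G^1_{\mathrm{stat}}(\mathbf{x}) - g(x)$, a centered random walk in $x$ with i.i.d.\ increments having all exponential moments in a neighborhood of $0$. Using the explicit form of $h$, one has $h(\underline{x})/(4N) = B r^2 N^{1/3}$ for a positive constant $B$ depending only on $\rho$. Decomposing $S_x = S_{\underline{x}} + (S_x - S_{\underline{x}})$ on the interval $[\underline{x}, \overline{x}]$ and using a union bound,
\[
\mathbb{P}^{\rho}\Big(\max_{\underline{x} \leq x \leq \overline{x}} S_x \geq B r^2 N^{1/3}\Big) \leq \mathbb{P}\Big(S_{\underline{x}} \geq \tfrac{B}{2} r^2 N^{1/3}\Big) + \mathbb{P}\Big(\max_{\underline{x} \leq x \leq \overline{x}} (S_x - S_{\underline{x}}) \geq \tfrac{B}{2} r^2 N^{1/3}\Big).
\]

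I would bound each term by the Bernstein-type inequality for centered i.i.d.\ exponential sums. For the first term, $S_{\underline{x}}$ is a sum of $\underline{x} = rN^{2/3}$ centered exponentials, so Bernstein gives a bound of the form $\exp(-c\min\{r^3, r^2 N^{1/3}\})$, which is $\exp(-c r^3)$ in our range $r \leq \gamma_1 N^{1/3}$. For the second term, $(S_x - S_{\underline{x}})_{x \geq \underline{x}}$ is a centered random walk of (at most) $N^{2/3}$ steps, so $e^{\lambda(S_x - S_{\underline{x}})}$ is a submartingale for $\lambda \in (0, (1-\rho)/2)$; Doob's maximal inequality gives $e^{-\lambda s} M(\lambda)^{N^{2/3}}$, where $\log M(\lambda) \leq C_0 \lambda^2$ on that range. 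Optimizing $\lambda$ at $s = B r^2 N^{1/3}/2$ (and capping at $(1-\rho)/2$ when the optimum would exceed it) yields $\exp(-c\min\{r^4, r^2 N^{1/3}\}) \leq \exp(-c r^3)$ in our regime.

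The main obstacle to watch is making sure the Doob step is carried out with $\lambda$ inside the region of convergence of the MGF of the centered exponential; this forces two cases, a ``moderate'' one where the unconstrained optimum $\lambda \sim r/N^{1/3}$ is admissible and gives the $r^4$ exponent, and a ``large'' one where $\lambda$ has to be capped at a fixed constant and then the trivial exponential Markov inequality already produces an exponent $\gtrsim r^2 N^{1/3}$. In both cases the exponent dominates $r^3$ once $r \leq \gamma_1 N^{1/3}$, and combining with the first-term bound yields the claimed estimate.
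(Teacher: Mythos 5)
Your proof is correct and reaches the same bound, but the organization differs from the paper's. The paper applies Doob's maximal inequality once to the exponential submartingale $\exp(\lambda M_x/\mathrm{norm})$ over the entire interval, bounds the resulting MGF of $M_{\overline{x}}$ at the normalized scale (Lemma \ref{mgf_converges}/\ref{mgf_converges1}), and optimizes $\lambda \sim r^{3/2}$ to obtain the $e^{-cr^3}$ exponent in one shot. You instead split $\max_{\underline{x}\le x\le\overline{x}} S_x \leq S_{\underline{x}} + \max_{\underline{x}\le x\le\overline{x}}(S_x - S_{\underline{x}})$ and control the two pieces separately: a single Bernstein bound for $S_{\underline{x}}$ (variance $\sim rN^{2/3}$, threshold $\sim r^2N^{1/3}$, i.e.\ $\sim r^{3/2}$ standard deviations, yielding $\exp(-c\min\{r^3, r^2N^{1/3}\}) = e^{-cr^3}$ in the regime $r \leq \gamma_1 N^{1/3}$), and a Doob/Chernoff step for the increment over a short window of $N^{2/3}$ steps, yielding $\exp(-c\min\{r^4, r^2N^{1/3}\})$, which dominates $e^{-cr^3}$ in the same regime. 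Your decomposition makes the scale separation transparent — the $r^3$ exponent manifestly comes from the $S_{\underline{x}}$ piece, while the short-interval increment is subdominant — whereas the paper's single Doob step is more compact. The one step I'd ask you to spell out is the ``capped'' regime $r \gtrsim N^{1/6}$: when the unconstrained optimum $\lambda^*\sim r^2/N^{1/3}$ exceeds the admissible range and $\lambda$ is frozen at a constant $\lambda_0$, you must check that $\lambda_0 s$ dominates $\log M(\lambda_0)\cdot N^{2/3}$; this does hold because the cap only activates once $s \gtrsim N^{2/3}$, so $\lambda_0 s - C_0\lambda_0^2 N^{2/3} \geq \lambda_0 s/2 \gtrsim r^2 N^{1/3} \geq r^3$ for $r\leq \gamma_1 N^{1/3}$, but it is worth making the cancellation explicit.
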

	\begin{proof}
	Note that $M_n=G(\mathbf{0},\mathbf{n})-\frac{n}{1-\rho}$ where
		$n\in(rN^{2/3},(r+1)N^{2/3})$ is a martingale. Also note that we do
		the proof for the case $r\geq 0$. For negative $r$, the proof
		is the same, except that the martingale
		$M_{-n}'=G(-\mathbf{n},\mathbf{0})$ is used instead of
		$M_n$. Coming back to the case $r\geq 0$, on using Doob's maximal
		inequality for $M_n$, we get that for any $\lambda>0$ and
		$r>\frac{1}{3}$,
		\begin{equation*}
   \mathbb{P}^{\rho}\left(\max\left\{G^1_{\mathrm{stat}}(\mathbf{x})-\frac{x}{1-\rho}\right\}\geq \frac{N^{1/3}r^2\rho}{16(1-\rho)^3}\right)\leq
			\frac{1}{\exp\left(\frac{\lambda
			r^2\rho}{16\sqrt{r+1}(1-\rho)^2}\right)}\mathbb{E}^{\rho}\left[
			\exp\left(\frac{\lambda
			(G^1_{\mathrm{stat}}(\mathbf{\overline{x}})-\frac{\overline{x}}{1-\rho})}{\frac{\sqrt{r+1}N^{1/3}}{1-\rho}}
			\right)\right].
		\end{equation*}
		Using that $r>\frac{1}{3}$, we finally get that
		\begin{equation}
			\label{doobs_maximal}
   \mathbb{P}^{\rho}\left(\max\left\{G^1_{\mathrm{stat}}(\mathbf{x})-\frac{x}{1-\rho}\right\}\geq \frac{N^{1/3}r^2\rho}{16(1-\rho)^3}\right)\leq \frac{1}{\exp\left(\frac{\lambda
			r^{3/2}\rho}{32(1-\rho)^2}\right)}\mathbb{E}^{\rho}\left[
			\exp\left(\frac{\lambda
			(G^1_{\mathrm{stat}}(\mathbf{\overline{x}})-\frac{\overline{x}}{1-\rho})}{\frac{\sqrt{r+1}N^{1/3}}{1-\rho}}
			\right)\right].
		\end{equation}
		The above term is bounded by using that $\mathbb{E}^{\rho}\left[
			\exp\left(\frac{\lambda
			(G^1_{\mathrm{stat}}(\mathbf{\overline{x}})-\frac{\overline{x}}{1-\rho})}{\frac{\sqrt{r+1}N^{1/3}}{1-\rho}}\right)\right]\leq
			e^{C^*\lambda^2}$ for a proper choice of parameters, and some
			constant $C^*$ depending on $\gamma_1,\rho$. This is
			formally done in Lemma \ref{mgf_converges1} which is proved
			using the technical Lemma \ref{mgf_converges}; the proofs are 
			routine and are
			moved to the appendix.	
			Returning to the proof of the lemma, looking at \eqref{doobs_maximal},
			we choose $\lambda$ so that it minimizes
			$C^*\lambda^2-\frac{\lambda r^{3/2}\rho}{32(1-\rho)^2}$,
			that is, we choose $\lambda=\frac{r^{3/2}\rho}{64C^*(1-\rho)^2}$,
			and the value of the above expression for this choice of
			$\lambda$ is $-\frac{r^3\rho^2}{2^{12}C^*(1-\rho)^4}$. Plugging in
			this value of $\lambda$ in \eqref{doobs_maximal}, we get that for
			$N$ large enough, and for $\gamma_1 N^{1/3} > r>
			\frac{1}{3}\lor r_0$,
			\begin{equation}
				\label{first_part_of_second_term}
					\mathbb{P}^{\rho}\left(\max\left\{G^1_{\mathrm{stat}}(\mathbf{x})-\frac{x}{1-\rho}\right\}\geq \frac{N^{1/3}r^2\rho}{16(1-\rho)^3}\right)\leq
					e^{-\frac{r^3\rho^2}{2^{12}C^*(1-\rho)^4}}.
			\end{equation}
			This is what we wanted to prove. To include all $0< r \leq
			r_0\lor \frac{1}{3}$,
			just adjust the values of the constants $C,c$ in the
			statement of the lemma.
		\end{proof}
	\begin{lemma}
				\label{mgf_converges}
				For any constant $C^*$ with $C^*>
				\frac{1}{2}$, there exists a constant $\delta_0 \in(0,1)$
				depending on $C^*,\rho$ such
				that for all $0<r<(1-\rho)^2N^{1/3}$, $N>0$ and $\lambda>0$ satisfying $0<\lambda<\delta_0
				\sqrt{\overline{x}}$, we
				have
				\begin{displaymath}
					\mathbb{E}^{\rho}\left[
			\exp\left(\frac{\lambda
			(G^1_{\mathrm{stat}}(\mathbf{\overline{x}})-\frac{\overline{x}}{1-\rho})}{\frac{\sqrt{r+1}N^{1/3}}{1-\rho}}\right)\right]\leq
			e^{C^*\lambda^2}.
				\end{displaymath}
				Note that $\delta_0$ can be chosen such
				that $\delta_0\rightarrow 1$ as $C^*\rightarrow
				\infty$.
			\end{lemma}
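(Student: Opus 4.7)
The statement reduces to a one-dimensional moment-generating-function computation for a sum of i.i.d.\ exponentials, so I will just carry that out directly. The point $\mathbf{\overline{x}} = (\overline{x},0)$ lies on the $x$-axis, and the unique up-right path from $\mathbf{0}$ to $\mathbf{\overline{x}}$ runs along the axis. Consequently,
\begin{displaymath}
G^1_{\mathrm{stat}}(\mathbf{\overline{x}}) \;=\; \sum_{i=1}^{\overline{x}} W_i,\qquad W_i \stackrel{\text{iid}}{\sim} \mathrm{exp}(1-\rho),
\end{displaymath}
so $\mathbb{E}^\rho[G^1_{\mathrm{stat}}(\mathbf{\overline{x}})] = \overline{x}/(1-\rho)$, and the centered MGF of each summand factors out.

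Next, I would introduce the abbreviation $u := \lambda/(\sqrt{r+1}\,N^{1/3})$, so that the rescaling in the denominator converts the expectation in the statement into
\begin{displaymath}
\mathbb{E}^\rho\!\left[\exp\!\left(\tfrac{\lambda(G^1_{\mathrm{stat}}(\mathbf{\overline{x}})-\overline{x}/(1-\rho))}{\sqrt{r+1}\,N^{1/3}/(1-\rho)}\right)\right] \;=\; \Bigl(\tfrac{e^{-u}}{1-u}\Bigr)^{\overline{x}},
\end{displaymath}
valid provided $u<1$. Since $\overline{x}=(r+1)N^{2/3}$, we have $\sqrt{\overline{x}} = \sqrt{r+1}\,N^{1/3}$, so the hypothesis $\lambda < \delta_0\sqrt{\overline{x}}$ is equivalent to $u<\delta_0<1$, safely inside the region of convergence. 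The key identity is the clean cancellation $\overline{x}\,u^{2} = \lambda^{2}$.

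It then remains to expand the logarithm: $\log\frac{e^{-u}}{1-u} = -u - \log(1-u) = \sum_{k\ge 2} u^{k}/k$, which for $u\in[0,1)$ is bounded crudely by
\begin{displaymath}
\frac{u^{2}}{2} + \sum_{k\ge 3}\frac{u^{k}}{3} \;=\; \frac{u^{2}}{2} + \frac{u^{3}}{3(1-u)} \;\le\; u^{2}\Bigl(\tfrac{1}{2} + \tfrac{\delta_0}{3(1-\delta_0)}\Bigr)\qquad\text{for }u\in[0,\delta_0].
\end{displaymath}
Multiplying by $\overline{x}$ and using $\overline{x}\,u^{2}=\lambda^{2}$ gives a log-MGF bound of $\lambda^{2}\bigl(\tfrac{1}{2}+\tfrac{\delta_0}{3(1-\delta_0)}\bigr)$. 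For any prescribed $C^{*}>1/2$, pick $\delta_0\in(0,1)$ so that $\tfrac{1}{2}+\tfrac{\delta_0}{3(1-\delta_0)}\le C^{*}$, which is possible because the left-hand side is continuous and increasing in $\delta_0$, equals $1/2$ at $\delta_0=0$, and tends to $+\infty$ as $\delta_0\to 1$; this also yields the claim that $\delta_0\to 1$ as $C^{*}\to\infty$.

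There is no genuine obstacle here: the statement is morally a Chernoff-type sub-exponential MGF bound for exponential summands, and the only bookkeeping is tracking that the denominator $\sqrt{r+1}\,N^{1/3}/(1-\rho)$ rescales so that $\overline{x}u^{2}$ matches $\lambda^{2}$ exactly. The same argument (with $(-\mathbf{n},\mathbf{0})$ in place of $(\mathbf{0},\mathbf{n})$ and rate $\rho$ instead of $1-\rho$) handles the symmetric case $r<0$ that is invoked silently in the application to Lemma~\ref{first_term_again}.
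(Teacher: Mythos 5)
Your proof is correct and follows essentially the same route as the paper: write $G^1_{\mathrm{stat}}(\mathbf{\overline{x}})$ as a sum of i.i.d.\ $\exp(1-\rho)$ weights, compute the centered MGF explicitly as $\bigl(e^{-u}/(1-u)\bigr)^{\overline{x}}$ with $u=\lambda/\sqrt{\overline{x}}$, take logarithms and Taylor-expand, and use the exact cancellation $\overline{x}u^{2}=\lambda^{2}$. The paper's own proof is just a terser version of the same computation; you have simply spelled out the series bound $\sum_{k\ge 2}u^{k}/k\le u^{2}\bigl(\tfrac12+\tfrac{\delta_0}{3(1-\delta_0)}\bigr)$ and the resulting choice of $\delta_0$, which also makes the $\delta_0\to 1$ claim and the sharpness of the threshold $C^{*}>\tfrac12$ transparent.
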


		\begin{lemma}
				\label{mgf_converges1}
			For any positive constant $\gamma_1$ such that $\gamma_1<(1-\rho)^2$,
			there exist positive constants 
				$C^*,N_1,r_0$ depending on $\rho,\gamma_1$ such that for
				$r_0<r<\gamma_1 N^{1/3}$ and
				$\lambda=\frac{r^{3/2}\rho}{64C^*(1-\rho)^2}$ and $N\geq N_1$,
				we have
				\begin{displaymath}
					\mathbb{E}^{\rho}\left[
			\exp\left(\frac{\lambda
			(G^1_{\mathrm{stat}}(\mathbf{\overline{x}})-\frac{\overline{x}}{1-\rho})}{\frac{\sqrt{r+1}N^{1/3}}{1-\rho}}\right)\right]\leq
			e^{C^*\lambda^2}.
				\end{displaymath}
			\end{lemma}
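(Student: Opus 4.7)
The plan is to deduce Lemma \ref{mgf_converges1} directly from Lemma \ref{mgf_converges} by verifying that the prescribed value $\lambda=\frac{r^{3/2}\rho}{64 C^*(1-\rho)^2}$ lies in the admissible range $(0,\delta_0\sqrt{\overline{x}})$ of that lemma, for a self-consistent choice of $C^*$ depending only on $\rho$ and $\gamma_1$.

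First I would substitute $\overline{x}=(r+1)N^{2/3}$ (valid for $r\geq 0$), so that $\sqrt{\overline{x}}=\sqrt{r+1}\,N^{1/3}$. The admissibility condition of Lemma \ref{mgf_converges} then reads
\[\frac{r^{3/2}\rho}{64 C^*(1-\rho)^2} \;<\; \delta_0 \sqrt{r+1}\,N^{1/3}.\]
Using the elementary inequality $r^{3/2}/\sqrt{r+1}\leq r$ for $r\geq 0$, it suffices to ensure $\frac{r\rho}{64 C^*(1-\rho)^2}<\delta_0 N^{1/3}$, and using the hypothesis $r<\gamma_1 N^{1/3}$ this collapses to the $N$-independent inequality
\[C^*\delta_0 \;>\; \frac{\gamma_1\rho}{64(1-\rho)^2}.\]

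To close the argument, I would invoke the qualitative fact asserted in Lemma \ref{mgf_converges} that $\delta_0=\delta_0(C^*)\to 1$ as $C^*\to\infty$, and choose $C^*$ large (depending only on $\rho,\gamma_1$) so that both $\delta_0(C^*)>1/2$ and $C^*>\gamma_1\rho/(32(1-\rho)^2)$ hold. With this $C^*$ fixed, the displayed inequality is satisfied uniformly in $r\in(r_0,\gamma_1 N^{1/3})$ and $N\geq N_1$, so Lemma \ref{mgf_converges} applies and yields the desired MGF bound. The constants $r_0$ and $N_1$ play essentially a cosmetic role: $r_0$ keeps $r$ bounded below so that $r^{3/2}/\sqrt{r+1}\leq r$ loses no multiplicative constant, and $N_1$ ensures $N$ is large enough for the ambient estimates to apply.

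The whole argument is essentially bookkeeping. The only mild subtlety, and arguably the main obstacle, is the self-consistent choice of $C^*$, because $\delta_0$ itself depends on $C^*$. This is resolved via the monotone behaviour $\delta_0\to 1$ from Lemma \ref{mgf_converges}: one first picks $C^*$ large enough to make $\delta_0$ bounded below by $1/2$, and then enlarges $C^*$ further if necessary to absorb the factor $\gamma_1\rho/(1-\rho)^2$.
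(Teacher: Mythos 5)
Your proof is correct and takes essentially the same route as the paper: both reduce to checking $\lambda/\sqrt{\overline{x}}\leq\delta_0$, both use $r^{3/2}/\sqrt{r+1}\leq r$ followed by $r<\gamma_1 N^{1/3}$ to obtain an $N$-independent threshold, and both close the self-referential loop in $C^*$ via $\delta_0(C^*)\to 1$.
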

			
			As we mentioned earlier, the proofs of Lemma \ref{mgf_converges}
			and Lemma \ref{mgf_converges1} have been postponed to the
			appendix. We now bound the second term in \eqref{second_split}, that is, we
			aim to show the following lemma:
			\begin{lemma}
				\label{second_split_second_term_lemma}
			For any fixed positive $\gamma_1,\gamma_2$ with $\gamma_1<(1-\rho)^2$
			and $\gamma_2<\rho^2$, there exist 
				$N_0,C,c$
				depending on $\rho,\gamma_1,\gamma_2$ such that for all $N>N_0$ and all
				integers $r$ with
				$\gamma_1 N^{1/3}>r>-\gamma_2 N^{1/3}$, we have that
				\begin{displaymath}
					\mathbb{P}^{\rho}\left(\max_{x\in
					[rN^{2/3},(r+1)N^{2/3}]}\left\{G(\mathbf{x}^{\uparrow},v_N)-f(v_N-\mathbf{x}^{\uparrow})\right\}\geq
					\frac{h(\underline{x})}{4N}\right)\leq
					Ce^{-c |r|^3}.
				\end{displaymath}
			\end{lemma}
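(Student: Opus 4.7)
The plan is to invoke the point-to-line moderate deviation estimate, Proposition \ref{point_to_line}, after replacing the deterministic centering $f(v_N-\mathbf{x}^{\uparrow})$ by the expectation $\mathbb{E}[G(\mathbf{x}^{\uparrow},v_N)]$. First I would unpack the threshold. For $r\geq 1$ and $\underline{x}=rN^{2/3}$, a direct substitution into the definition of $h$ gives
\[
\frac{h(\underline{x})}{4N}=\frac{\rho}{16(1-\rho)^{3}}\,r^{2}N^{1/3}=:\kappa_\rho\, r^{2}N^{1/3},
\]
with a symmetric formula in terms of $(r+1)^{2}$ and $(1-\rho)/\rho^{3}$ for $r\leq -2$. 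For $|r|$ below some fixed $r_0$ the claim holds by choosing $C$ large, so I may restrict to $|r|\geq r_0$, where the target event is $\max_{x}\{G(\mathbf{x}^{\uparrow},v_N)-f(v_N-\mathbf{x}^{\uparrow})\}\geq \kappa r^{2}N^{1/3}$ for a constant $\kappa>0$ depending only on $\rho$.

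Second, I would bound the discrepancy between $f(v_N-\mathbf{x}^{\uparrow})$ and $\mathbb{E}[G(\mathbf{x}^{\uparrow},v_N)]$. The constraints $\gamma_1<(1-\rho)^{2}$ and $\gamma_2<\rho^{2}$ ensure that, uniformly in $x\in[rN^{2/3},(r+1)N^{2/3}]$, the displacement $v_N-\mathbf{x}^{\uparrow}$ has both coordinates of order $N$ and points in a direction bounded away from the coordinate axes. Integrating the two-sided tails of Proposition \ref{point_to_point} then yields
\[
\bigl|\mathbb{E}[G(\mathbf{x}^{\uparrow},v_N)]-f(v_N-\mathbf{x}^{\uparrow})\bigr|\leq K\,N^{1/3}
\]
for a constant $K=K(\rho,\gamma_1,\gamma_2)$, uniformly in such $x$.

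Finally, I would apply Proposition \ref{point_to_line}. For $r>0$, take $m=(r+\tfrac{1}{2})/((1-\rho)^{2}N^{1/3})$, so that the length-$2N^{2/3}$ segment $\mathbb{L}_m(N)$ contains $[rN^{2/3},(r+1)N^{2/3}]$; since $|r|<\max(\gamma_1,\gamma_2)N^{1/3}$, the value $|m|$ is a bounded constant and the condition $|m|<\psi N^{1/3}$ is automatic for any fixed $\psi$ and $N$ large. Setting $y=(\kappa/2)\,r^{2}$, which stays below $\delta_2 N^{2/3}$ for $\delta_2$ chosen appropriately in terms of $\kappa$ and $\gamma_1,\gamma_2$, and combining with the discrepancy bound above, the proposition yields
\[
\mathbb{P}^{\rho}\!\left(\max_{x}\{G(\mathbf{x}^{\uparrow},v_N)-f(v_N-\mathbf{x}^{\uparrow})\}\geq \kappa\,r^{2}N^{1/3}\right)\leq Ce^{-cy^{3/2}}=Ce^{-c\,r^{3}}
\]
for $r$ exceeding a threshold large enough that $KN^{1/3}\leq (\kappa/2)r^{2}N^{1/3}$; smaller $r$ is absorbed in the constants. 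The case $r<0$ runs identically after reflecting the environment across the diagonal so that Proposition \ref{point_to_line} applies to a segment on $\{x=0\}$, justified by the symmetry of exponential LPP under interchange of the axes. The only nontrivial piece is the shape-versus-mean discrepancy of the previous paragraph; this is the main obstacle, but it is routine once the point-to-point tails of Proposition \ref{point_to_point} are in hand.
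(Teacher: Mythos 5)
Your proof is correct and takes essentially the same approach as the paper: you bound the discrepancy $\bigl|\mathbb{E}[G(\mathbf{x}^{\uparrow},v_N)]-f(v_N-\mathbf{x}^{\uparrow})\bigr|$ by $O(N^{1/3})$ using Proposition \ref{point_to_point}, absorb this into the threshold by restricting to $|r|$ above a fixed $r_0$, and then apply the point-to-line bound of Proposition \ref{point_to_line} with $y\asymp r^2$ to obtain $e^{-c r^3}$. The only differences are cosmetic — you spell out the choice of $m$, the choice of $y=(\kappa/2)r^2$, and the reflection argument for $r<0$, whereas the paper leaves these implicit.
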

			\begin{proof}
			To begin, note that we only need to show the above result for
			$|r|>r_0$ for some positive constant $r_0$ since
			we can handle the case of small $r$ by adjusting the constants
			$C,c$. We only do it for the case $r>0$; the other case is
			analogous. Now, for a specific $C'>0$, we have that
			\begin{align}
	   &\mathbb{P}^{\rho}\left(\max\left\{G(\mathbf{x}^{\uparrow},v_N)-f(v_N-\mathbf{x}^{\uparrow})\right\}\geq \frac{N^{1/3}r^2\rho}{16(1-\rho)^3}\right)\nonumber\\
		  &\leq
		  \mathbb{P}^{\rho}\left(\max\left\{G(\mathbf{x}^{\uparrow},v_N)-\mathbb{E}^{\rho}[G(\mathbf{x}^{\uparrow},v_N)]\right\}\geq \frac{N^{1/3}r^2\rho}{16(1-\rho)^3}
		  -C'N^{1/3}\right). \label{second_split_second_term}
			\end{align}
			To get the above expression, the constant $C'$ is chosen
			so that
			$|\mathbb{E}^{\rho}[G(\mathbf{x}^{\uparrow},v_N)]-f(v_N-\mathbf{x}^{\uparrow})|<
			C' N^{1/3}$ for all $x\in[rN^{2/3},(r+1)N^{2/3}]$. Indeed, for a
			fixed choice of $\gamma_1$ and $\gamma_2$, $-\gamma_2 N^{2/3}<r<\gamma_1 N^{2/3}$ implies that for all $z\in
			(-\gamma_2 N^{2/3},\gamma_1 N^{2/3})$, we have that the straight line joining
			$\mathbf{z}^{\uparrow}$ and $v_N$ has slope uniformly bounded away from $0$
			and $\infty$. Hence, we are in the setting of Proposition
			\ref{point_to_point} which implies that for some constant $C'$
			(depending only on $\gamma_1,\gamma_2,\rho$), we have
			$|\mathbb{E}^{\rho}[G(\mathbf{x}^{\uparrow},v_N)]-f(v_N-\mathbf{x}^{\uparrow})|<
			C' N^{1/3}$ for all $x\in[rN^{2/3},(r+1)N^{2/3}]$.
			Choosing $r_0$ large depending on $C'$, we get that 
			\begin{align}
		&\mathbb{P}^{\rho}\left(\max\left\{G(\mathbf{x}^{\uparrow},v_N)-\mathbb{E}^{\rho}[G(\mathbf{x}^{\uparrow},v_N)]\right\}\geq\frac{N^{1/3}r^2\rho}{16(1-\rho)^3}
		  -C'N^{1/3}\right)\nonumber \\
		  &\leq
		  \mathbb{P}^{\rho}\left(\max\left\{G(\mathbf{x}^{\uparrow},v_N)-\mathbb{E}^{\rho}[G(\mathbf{x}^{\uparrow},v_N)]\right\}\geq \frac{N^{1/3}r^2\rho}{32(1-\rho)^3}\right)
		  \label{second_split_first_term}.
			\end{align}
			To finish, just observe that the final expression fits exactly in
			the setting of Proposition \ref{point_to_line}, and note that
			$(r^2)^{3/2}=r^3$.
			\end{proof}

			\begin{proof}[Proof of Proposition \ref{r,r+1}]
				By \eqref{split_alpha} and \eqref{second_split}, we have
				\begin{align*}
					&\mathbb{P}^{\rho} (Z^{\mathbf{0}\rightarrow v_N}\in
	[rN^{2/3},(r+1)N^{2/3}])\\
	&\leq \mathbb{P}^{\rho}\left(G^1_{\mathrm{stat}}(v_N)\leq N-
	\frac{h(\underline{x})}{2N}\right)+\mathbb{P}^{\rho}\left( \max_{x\in
	[rN^{2/3},(r+1)N^{2/3}]}\{G^1_{\mathrm{stat}}(\mathbf{x})+G(\mathbf{x}^{\uparrow},v_N)\}\geq \frac{h(\underline{x})}{2N}
	\right)\\
	&\leq C_1e^{-c_1 r^6}
	+\mathbb{P}^{\rho}\left(\max\left\{G^1_{\mathrm{stat}}(\mathbf{x})-g(x)\right\}\geq
	\frac{h(\underline{x})}{4N}\right)+\mathbb{P}^{\rho}\left(\max\left\{G(\mathbf{x}^{\uparrow},v_N)-f(v_N-\mathbf{x}^{\uparrow})\right\}\geq
	\frac{h(\underline{x})}{4N}\right)\\
	&\leq C_1e^{-c_1r^6}+C_2e^{-c_2|r|^3}+C_3e^{-c_3|r|^3}\leq Ce^{-c|r|^3}.
				\end{align*}
		The first term in the fourth line was obtained by using Lemma
		\ref{first_term} and the next two terms in the fourth line were
		obtained by using Lemma \ref{first_term_again} and
		Lemma \ref{second_split_second_term_lemma}.
			\end{proof}
		 We now use Proposition \ref{r,r+1} to prove Theorem \ref{exit_time}.

			\begin{proof}[Proof of Theorem \ref{exit_time}]
				We will show that for all
				$r'>0$ and $N$ sufficiently large,
				\begin{displaymath}
					\mathbb{P}^{\rho}\left( Z^{\mathbf{0}\rightarrow v_N} \geq r'N^{2/3}
		\right)\leq Ce^{-cr'^3}.
				\end{displaymath}
				Note that we are only doing the proof for $r'>0$ but the
				proof for $r'<0$ is the same with the role of $\gamma_1$
				being replaced by $\gamma_2$. Note that we only need to worry about $r'>r_0$ as we can
				adjust the constants to get the result for small $r'$. For any
				fixed positive
				$\gamma_1$ with $\gamma_1<(1-\rho)^2$, note that
				\begin{equation}
					\label{main_from_parts}
					\mathbb{P}^{\rho}\left( Z^{\mathbf{0}\rightarrow v_N} \geq r'N^{2/3}
					\right)\leq \sum_{i=0}^{\gamma_1
					N^{1/3}-r'} \mathbb{P}^{\rho}\left( Z^{\mathbf{0}\rightarrow v_N} \in
		\left[(r'+i)N^{2/3},(r'+i+1)N^{2/3}\right]
		\right) + \mathbb{P}^{\rho}\left( Z^{\mathbf{0}\rightarrow v_N}\geq
		\gamma_1 N
		\right).
				\end{equation}
				Note that the first term is present only if $r'\leq \gamma_1
				N^{1/3}$. The first term involving the sum can now be bounded by using Proposition
 \ref{r,r+1} as follows--
 \begin{displaymath}
	 \sum_{i=0}^{\gamma_1
					N^{1/3}-r'} \mathbb{P}^{\rho}\left( Z^{\mathbf{0}\rightarrow v_N} \in
		\left[(r'+i)N^{2/3},(r'+i+1)N^{2/3}\right]
		\right)\leq \sum_{i=0}^{\infty}C_1e^{-c_1 (r'+i)^3}\leq
		C_1'e^{-c_1'r'^3}.
 \end{displaymath}
 Here $C_1',c_1'$ are constants depending on $\gamma_1,\rho$. We will be
 choosing a specific value of $\gamma_1$ later in the argument. It now
 remains to bound the second term in \eqref{main_from_parts}. It turns out
 that the far end of the tail is easy to bound by a different argument. Note that we have the following
	  crude estimate:
	  \begin{align}
		  \label{crude}
		  \mathbb{P}^{\rho}\left(
		  Z^{\mathbf{0}\rightarrow v_N}\geq \gamma_1 N \right)\leq
		  \mathbb{P}^{\rho}\left(G(\mathbf{0},( (1-\rho)^2 N,0))+G(
		  (\gamma_1
		  N,1),v_N)>G^1_{\mathrm{stat}}(v_N)\right).
	  \end{align}
	 The same strategy used in the first term above works again if we can
	  choose $\gamma_1$ so that
	  $\mathbb{E}^{\rho}[G^1_{\mathrm{stat}}(( (1-\rho)^2 N,0))]+f(v_N-(\gamma_1 N,1))$ is at most
	  $N-\beta N$ for some $0<\beta<1$ depending on $\rho$. Noting that
	  $\mathbb{E}^{\rho}[G^1_{\mathrm{stat}}(( (1-\rho)^2 N,0))]=(1-\rho)N$, we have 
	  \begin{displaymath}
		  \mathbb{E}^{\rho}[G^1_{\mathrm{stat}}(( (1-\rho)^2 N,0))]+f(v_N-(\gamma_1 N,1))=
		  N\left(
		  (1-\rho)+\left( \sqrt{\rho^2-\frac{1}{N^2}}+(1-\rho)\sqrt{1-\frac{\gamma_1}{(1-\rho)^2}}\right)^2\right).
	  \end{displaymath}
	  At $\gamma_1=(1-\rho)^2$, the coefficient of $N$ is the above expression
	  is at most $(1-\rho)+\rho^2<(1-\rho)+\rho=1$. Hence, by continuity, we can choose
	  $\gamma_1$ sufficiently close to $(1-\rho)^2$ and obtain a positive
	  value of $\beta$ as needed. Thus, we now have for all $N$ large
	  enough,
	  \begin{align}
		  \mathbb{E}^{\rho}[G^1_{\mathrm{stat}}(( (1-\rho)^2 N,0))]+f(v_N-(\gamma_1 N,1))\leq
		  N-\beta N. \label{super_mean_small}
	  \end{align}
	  Hence, by using \eqref{super_mean_small} along with \eqref{crude}, we finally have that
	  \begin{align}
		   &\mathbb{P}^{\rho}\left(
		  Z^{\mathbf{0}\rightarrow v_N}\geq \gamma_1 N \right)\nonumber\\
		  &\leq \mathbb{P}^{\rho}\left(
		  G^1_{\mathrm{stat}}(v_N)<N-\frac{\beta N}{2}
		  \right)+\mathbb{P}^{\rho}\left(G^1_{\mathrm{stat}}(( (1-\rho)^2
		  N,0))-(1-\rho)N>\frac{\beta N}{4} \right)\nonumber\\
		  &+
		  \mathbb{P}^{\rho}\left( G( (\gamma_1
		  N,1),v_N)-f(v_N-(\gamma_1 N,1))>
		  \frac{\beta N}{4} \right). \label{a_lot_split}
	  \end{align}
 We can now repeat the arguments in Lemma
 \ref{first_term} and Lemma
			\ref{second_split_second_term_lemma} to bound the first and
			third terms. Indeed, the arguments are only made easier because there is
			no $\max$ involved. Note that the point-to-line estimate--
			Proposition \ref{point_to_line} used in
			Lemma \ref{second_split_second_term_lemma} is now substituted
			with the point-to-point estimate-- Proposition
			\ref{point_to_point}. The second term is bounded by using
			exponential concentration for sums of i.i.d.\ random variables. Hence, for large enough $N$, we have
			\begin{displaymath}
			 \mathbb{P}^{\rho}\left(
			 Z^{\mathbf{0}\rightarrow v_N}\geq \gamma_1 N \right)\leq C_4 e^{-c_4 N}.	
			\end{displaymath}
			To finish the proof, we go back to \eqref{main_from_parts}. For
			the case $r'\leq\gamma_1
			N^{1/3}$, we have
			\begin{displaymath}
					\mathbb{P}^{\rho}\left( Z^{\mathbf{0}\rightarrow v_N} \geq r'N^{2/3}
					\right) \leq C_1'e^{-c_1'r'^3`}+ C_4 e^{-c_4N}\leq
					C_5e^{-c_5r'^3}.
			\end{displaymath}
			For the case $r'>\gamma_1 N^{1/3}$, from
			\eqref{main_from_parts}, we have that
			\begin{displaymath}
					\mathbb{P}^{\rho}\left( Z^{\mathbf{0}\rightarrow v_N} \geq r'N^{2/3}
					\right) \leq C_4e^{-c_4 N}\leq Ce^{-cr'^3}.
			\end{displaymath}
			Note that the last inequality follows because we can restrict to
			$r'\leq (1-\rho)^2 N^{1/3}$ because Theorem \ref{exit_time} is
			vacuously true for the case $r'>(1-\rho)^2 N^{1/3}$. Indeed,
			$Z^{\mathbf{0}\rightarrow v_N}$ is deterministically smaller than
			$(1-\rho)^2 N$.
	\end{proof}

	\section{Upper tail estimates}

		The proof is along the same lines as the proof of Theorem
			\ref{exit_time} in the previous section; indeed, some
			ingredients are
			already proven implicitly in the last section. The proof of  (2)
			in Theorem \ref{tails_upper_upper} is straightforward by
			comparison to the point-to-point LPP estimates and we prove it
			now.

			\begin{proof}[Proof of (2) in Theorem \ref{tails_upper_upper}]
				Note that
				\begin{align*}
					\mathbb{P}^{\rho}(G^1_{\mathrm{stat}}(v_N)-N\geq
					yN^{1/3})&\geq 	\mathbb{P}(G( (1,1),v_N)-N\geq
					yN^{1/3})\nonumber\\
					&= \mathbb{P}(G( (1,1),v_N)-(N-1)\geq
					yN^{1/3}+1).
				\end{align*}
			Now, note that we are in the setting of Proposition
			\ref{import_upper_tail_lower_bound} which gives the result
			immediately.
			\end{proof}

			We now give a series of intermediate results with the aim of
			proving Theorem \ref{tails_upper_upper}. 
			\begin{proposition}
				\label{upper_upper}
			For each fixed $\delta_1\in (0,1)$ and  $\gamma_1,\gamma_2$ such that
			$(1-\rho)^2>\gamma_1>0$ and $\rho^2>\gamma_2>0$, there exist positive constants 
				$C,c,N_0$ depending on $\rho,\gamma_1,\gamma_2,\delta_1$ such
				that for all
				$N\geq N_0$ and $y$ satisfying $0<y<\delta_1 N^{2/3}$, we have
				\begin{enumerate}
					\item $\mathbb{P}^{\rho}\left( \max_{x\in
				[1,\gamma_1
				N]}\{G^1_{\mathrm{stat}}(\mathbf{x})+G(\mathbf{x}^{\uparrow},v_N)\}>N+yN^{1/3}\right)\leq
				Ce^{-cy^{3/2}}$.
			\item $\mathbb{P}^{\rho}\left( \max_{x\in
				[-\gamma_2 N,-1]}\{G^1_{\mathrm{stat}}(\mathbf{x})+G(\mathbf{x}^{\uparrow},v_N)\}>N+yN^{1/3}\right)\leq
				Ce^{-cy^{3/2}}$.
				\end{enumerate}
			\end{proposition}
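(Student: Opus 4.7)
The plan is to adapt the discretization argument used for Proposition \ref{r,r+1} to the new setting, with the extra budget $yN^{1/3}$ absorbed into each block-level estimate. Discretize the range into blocks $I_r = [rN^{2/3},(r+1)N^{2/3}]$ for $r=0,1,\ldots,\lfloor \gamma_1 N^{1/3}\rfloor$ and aim to prove the per-block bound
\[
\mathbb{P}^{\rho}\Bigl(\max_{x \in I_r}\bigl\{G^1_{\mathrm{stat}}(\mathbf{x})+G(\mathbf{x}^{\uparrow},v_N)\bigr\}\geq N+yN^{1/3}\Bigr)\leq C\,e^{-c(y+r^2)^{3/2}}.
\]
Summing over $r\geq 0$ will then give the claimed $Ce^{-cy^{3/2}}$ in part (1); part (2) follows by the same scheme with the roles of the boundary axes swapped, using the martingale $M'_{-n}=G(-\mathbf{n},\mathbf{0})$ in place of $M_n$, exactly as is done in Lemma \ref{first_term_again}.

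For each block, Lemma \ref{expecatation_x^2/n} gives $g(x)+f(v_N-\mathbf{x}^{\uparrow})\leq N-h(\underline{x})/N$ uniformly on $I_r$, where $\underline{x}$ is defined as in Section 4. Therefore the event in question forces
\[
\max_{x \in I_r}[G^1_{\mathrm{stat}}(\mathbf{x})-g(x)] + \max_{x \in I_r}[G(\mathbf{x}^{\uparrow},v_N)-f(v_N-\mathbf{x}^{\uparrow})]\geq yN^{1/3}+\frac{h(\underline{x})}{N}.
\]
A union bound reduces the analysis to two events in which one of the two maxima is at least $(y+C_\rho r^2)N^{1/3}/2$, with $C_\rho=\rho/(4(1-\rho)^3)$. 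The bulk maximum is handled directly by Proposition \ref{point_to_line} applied with its parameter set to $(y+C_\rho r^2)/2$, which immediately yields $Ce^{-c(y+r^2)^{3/2}}$. The boundary maximum is controlled by the Doob-martingale and MGF machinery of Lemma \ref{first_term_again}: apply Doob's maximal inequality to $M_n=G^1_{\mathrm{stat}}(\mathbf{n})-n/(1-\rho)$, combine with the MGF estimate of Lemma \ref{mgf_converges1}, and optimize the Chernoff parameter $\lambda$ against the enlarged threshold $(y+C_\rho r^2)N^{1/3}/2$; in the generic regime where the optimizing $\lambda$ satisfies the $\lambda\leq \delta_0\sqrt{\overline{x}}$ constraint of Lemma \ref{mgf_converges1}, this produces the same $(y+r^2)^{3/2}$-exponent bound as the bulk estimate.

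The final summation $\sum_{r\geq 0}e^{-c(y+r^2)^{3/2}}\leq C'e^{-c'y^{3/2}}$ is routine: split at $r\approx\sqrt{y}$; for $r\leq \sqrt{y}$ each term is $\leq e^{-cy^{3/2}}$ and there are $O(\sqrt{y})$ of them, while for $r>\sqrt{y}$ one has $(y+r^2)^{3/2}\geq r^3$, producing a convergent tail dominated by $e^{-cy^{3/2}}$. The main obstacle is the regime where $y$ is close to $\delta_1 N^{2/3}$ and $r$ is small, since the optimizing Chernoff parameter can then exceed the $\delta_0\sqrt{\overline{x}}$ constraint of Lemma \ref{mgf_converges1}; in that saturating regime the boundary bound degrades to an exponential-type estimate of the form $\exp(-c(y+r^2)N^{1/3})$, which nevertheless still dominates $\exp(-c(y+r^2)^{3/2})$ provided $y+r^2\leq N^{2/3}$. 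This last condition can be arranged by choosing $\gamma_1$ small enough (depending on $\delta_1$) so that $\delta_1+\gamma_1^2\leq 1$, allowing the summation argument to close and yielding the proposition.
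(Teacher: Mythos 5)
Your proposal follows essentially the same strategy as the paper: discretize the boundary into $N^{2/3}$-wide blocks, split the per-block event via a union bound into a boundary-side maximum (handled by Doob's maximal inequality plus the MGF estimate of Lemma \ref{mgf_converges}) and a bulk-side maximum (handled by Proposition \ref{point_to_line}), and then sum over $r$. The only substantive difference is how you select the Chernoff parameter $\lambda$ for the boundary term: the paper makes two explicit choices, $\lambda=\sqrt{(r+1)y}$ for $r<\alpha\sqrt{y}$ and $\lambda\propto r^2/\sqrt{r+1}$ for $r\geq\alpha\sqrt{y}$ (so that the constraint $\lambda<\delta_0\sqrt{\overline{x}}$ is automatic in each regime), whereas you propose to take the unconstrained optimizer and saturate at the constraint when it is violated. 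Both yield the unified per-block bound $Ce^{-c(y+r^2)^{3/2}}$, and your verification that the saturated exponent $\exp(-c(y+r^2)N^{1/3})$ still dominates $\exp(-c(y+r^2)^{3/2})$ is sound (one also needs the restriction to $y>y_0$, which both you and the paper invoke to absorb the $\mathcal{O}(N^{1/3})$ discrepancy between $\mathbb{E}[G(\mathbf{x}^\uparrow,v_N)]$ and $f(v_N-\mathbf{x}^\uparrow)$). One minor point: the final remark that one should ``choose $\gamma_1$ small enough so that $\delta_1+\gamma_1^2\leq 1$'' is unnecessary and slightly at odds with the statement, which asserts the bound for \emph{every} fixed $\gamma_1<(1-\rho)^2$ with constants allowed to depend on $\gamma_1$; since $y+r^2\leq(\delta_1+\gamma_1^2)N^{2/3}$ automatically, the factor $(\delta_1+\gamma_1^2)^{1/2}$ can simply be absorbed into the constant $c$ rather than restricting $\gamma_1$. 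Also the constraint you invoke is from Lemma \ref{mgf_converges}, not Lemma \ref{mgf_converges1}.
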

			\begin{proof}
				Note that we can prove the proposition for all $y> y_0$
				for some $y_0$ depending only on $\rho$ and then adjust
				$C$ and $c$ to account for $y\leq y_0$. We will only prove the
				first part of the proposition; the proof of the second part
				is analogous. As we did in the first section, we first control the maximum
				in an interval of width $N^{2/3}$.
				Following \eqref{second_split}, for any $0< r<\gamma_1
				N^{1/3}$, where $\gamma_1,\gamma_2$ will be fixed later, write
				\begin{align}
				&\mathbb{P}^{\rho}\left( \max_{x\in
				[rN^{2/3},(r+1)N^{2/3}]}\{G^1_{\mathrm{stat}}(\mathbf{x})+G(\mathbf{x}^{\uparrow},v_N)\}>N+yN^{1/3}\right)\nonumber\\
				&\leq
   \mathbb{P}^{\rho}\left(\max\left\{G^1_{\mathrm{stat}}(\mathbf{x})-\frac{x}{1-\rho}\right\}>\frac{yN^{1/3}}{2}+\frac{N^{1/3}r^2\rho}{16(1-\rho)^3}\right)\nonumber\\
		 &+\mathbb{P}^{\rho}\left(\max\left\{G(\mathbf{x}^{\uparrow},v_N)-f(v_N-\mathbf{x}^{\uparrow})\right\}>\frac{yN^{1/3}}{2}+\frac{N^{1/3}r^2\rho}{16(1-\rho)^3}\right).\label{upper_tail_upper_bound}
				\end{align}
				  On repeating the proof of Lemma
				 \ref{second_split_second_term_lemma}, we get that for
				 all $0< r <\gamma_1 N^{2/3}$, we have
				 \begin{equation}
					 \label{first}
					 \mathbb{P}^{\rho}\left(\max\left\{G(\mathbf{x}^{\uparrow},v_N)-f(v_N-\mathbf{x}^{\uparrow})\right\}>\frac{yN^{1/3}}{2}+\frac{N^{1/3}r^2\rho}{16(1-\rho)^3}\right)\leq
					 C_1e^{-\left(\frac{y}{2}+\frac{r^2\rho}{16(1-\rho)^3}-C'\right)^{3/2}}.
				 \end{equation}
				 In the above expression $C'$ comes from the proof of
				 Lemma \ref{second_split_second_term_lemma}. This handles the second term. We will now
				 bound the first term for all $y$ such that
				 $y< \delta_1 N^{2/3}$ by using the technique in Lemma
				 \ref{first_term_again}. We have
				 \begin{equation}
					 \label{choose_lambda}
   \mathbb{P}^{\rho}\left(\max\left\{G^1_{\mathrm{stat}}(\mathbf{x})-\frac{x}{1-\rho}\right\}>\frac{yN^{1/3}}{2}+\frac{N^{1/3}r^2\rho}{16(1-\rho)^3}\right)\leq
 \frac{1}{\exp\left(\frac{\lambda
					 r^2\rho}{16\sqrt{r+1}(1-\rho)^2}+ \frac{\lambda y(1-\rho)}{2\sqrt{r+1}}\right)}\mathbb{E}^{\rho}\left[
			\exp\left(\frac{\lambda
			(G^1_{\mathrm{stat}}(\mathbf{\overline{x}})-\frac{\overline{x}}{1-\rho})}{\frac{\sqrt{r+1}N^{1/3}}{1-\rho}}
			\right)\right].
				 \end{equation}
				Since we have assumed that $y< \delta_1 N^{2/3}$,
				we have that
				$\sqrt{r+1}\sqrt{y}<\delta_1\sqrt{\overline{x}}$. Hence, by
				Lemma \ref{mgf_converges}, with $C^*$ such that
				$\delta_0(C^*)\geq \delta_1$, we can choose
				$\lambda=\sqrt{r+1}\sqrt{y}$ to finally get that for all
				$0<r<\gamma_1 N^{2/3}$,					 
				\begin{align}
						 \label{random_walk_upper_upper}
   \mathbb{P}^{\rho}\left(\max\left\{G^1_{\mathrm{stat}}(\mathbf{x})-\frac{x}{1-\rho}\right\}>\frac{yN^{1/3}}{2}+\frac{N^{1/3}r^2\rho}{16(1-\rho)^3}\right)
						 &\leq \exp\left\{C^* (r+1)y -
						 \left(\frac{\sqrt{y}r^{2}\rho}{16(1-\rho)^2}+\frac{y^{3/2}(1-\rho)}{2}\right)
						 \right\}\nonumber\\ 
						 &=\exp\left\{C^*\left(  (r+1)y -
						 c_3\sqrt{y}r^2-c_4
						 y^{3/2}\right)\right\}\nonumber\\
						 &=\exp\left\{C^*\left( -c_3\sqrt{y}\left(
						 (r-\frac{\sqrt{y}}{2c_3})^2-\frac{y}{4c_3^2}-\frac{\sqrt{y}}{c_3}
						 \right) -c_4y^{3/2} \right)\right\}.
					 \end{align}	
					 We will be using the above expression for the case
					 $r<\alpha \sqrt{y}$, where $\alpha$ will be chosen
					 later. For
					 the case $r\geq\alpha\sqrt{y}$, we choose a
					 different $\lambda$ in \eqref{choose_lambda}.  Note that
					 from proof of Lemma
					 \ref{mgf_converges1},
					 $\frac{r^2}{32C^{**}\sqrt{r+1}(1-\rho)^2}\times
				 \frac{1}{\sqrt{\overline{x}}}$ is smaller than
				 $\delta_0(C^{**})$ (where $\delta_0$ comes from Lemma
				 \ref{mgf_converges}) if $y_0,N_0,C^{**}$ are chosen properly. Hence, we
				 can choose $\lambda=\frac{r^2}{32C^{**}\sqrt{r+1}(1-\rho)^2}$, and
				this gives that for $r\geq\alpha \sqrt{y}$, (increasing
				$y_0$ if necessary)
				\begin{equation}
					\label{}
   \mathbb{P}^{\rho}\left(\max\left\{G^1_{\mathrm{stat}}(\mathbf{x})-\frac{x}{1-\rho}\right\}>\frac{yN^{1/3}}{2}+\frac{N^{1/3}r^2\rho}{32(1-\rho)^3}\right)\leq
					\exp\left\{ -\frac{r^3\rho}{2^{12}C^{**}}(1-\rho)^2
					-\frac{ry}{2^7(1-\rho)C^{**}}\right\}.
				\end{equation}
				Finally, we piece this together to prove Proposition
				\ref{upper_upper}. Observe that
				\begin{align*}
					\label{big_sum}
					&\mathbb{P}^{\rho}\left( \max_{x\in
				[1,\gamma_1
				N]}\{G^1_{\mathrm{stat}}(\mathbf{x})+G(\mathbf{x}^{\uparrow},v_N)\}>N+yN^{1/3}\right)\\
				&\leq
				\sum_{r=0}^{\gamma_1
				N^{1/3}-1}\mathbb{P}^{\rho}\left( \max_{x\in
				[rN^{2/3},(r+1)N^{2/3}]}\{G^1_{\mathrm{stat}}(\mathbf{x})+G(\mathbf{x}^{\uparrow},v_N)\}>N+yN^{1/3}\right).
				\end{align*}
				Note that each term in the above equation was split into two
				terms. The first term was bounded in \eqref{first}, and note
				that by possibly increasing $y_0$, we have
				\begin{displaymath}
					\sum_{r=0}^\infty
					C_1e^{-\left(\frac{y}{2}+\frac{r^2\rho}{16(1-\rho)^3}-C'\right)^{3/2}}\leq
					C_1'e^{-c_1'y^{3/2}}.
				\end{displaymath}
				This handles the contribution to the sum coming from the
				first term. For the second term, we obtained two different bounds
				depending on the value of $r$. Choose $\alpha>0$ such that
				$0<\alpha<\frac{1}{2c_3}$ and
				\begin{displaymath}
					-c_3\left( (\alpha-\frac{1}{2c_3})^2
					-\frac{1}{4c_3^2} \right)-c_4<0.
				\end{displaymath}
				Now,
				note that
				\begin{displaymath}
					\sum_{r=0}^{\alpha \sqrt{y}}
					\exp\left\{C^*\left( -c_3\sqrt{y}\left(
						 (r-\frac{\sqrt{y}}{2c_3})^2-\frac{y}{4c_3^2}-\frac{\sqrt{y}}{c_3}
						 \right) -c_4y^{3/2}\right) \right\}\leq
						 C_2'e^{-c_2'y^{3/2}}.
				\end{displaymath}
				Finally, also note that
				\begin{displaymath}
					\sum_{r=\alpha
					\sqrt{y}}^{\infty}\exp\left\{
					-\frac{r^3\rho}{2^{12}C^{**}}(1-\rho)^2
					-\frac{ry}{2^7(1-\rho)C^{**}}\right\}\leq
					C_3'e^{-c_3'y^{3/2}}.				
				\end{displaymath}
			Thus we finally have that 
			\begin{displaymath}
					\mathbb{P}^{\rho}\left( \max_{x\in
				[1,\gamma_1
				N]}\{G^1_{\mathrm{stat}}(\mathbf{x})+G(\mathbf{x}^{\uparrow},v_N)\}>N+yN^{1/3}\right)\leq
				C_1'e^{-c_1'y^{3/2}}+C_2'e^{-c_2'y^{3/2}}+C_3'e^{-c_3'y^{3/2}}\leq
				Ce^{-cy^{3/2}}.
			\end{displaymath}
			This finishes the proof of the proposition.
			\end{proof}
			\begin{proof}[Proof of (1) in Theorem \ref{tails_upper_upper}]
				Fix any positive $\gamma_1,\gamma_2$ such that
				$\gamma_1<(1-\rho)^2$ and $\gamma_2<\rho^2$. Observe that 
				\begin{align*}
					\mathbb{P}^{\rho}(G^1_{\mathrm{stat}}(v_N)-N\geq
					yN^{1/3})
					&\leq \mathbb{P}^{\rho}\left( \max_{x\in
				[1,\gamma_1
				N]}\{G^1_{\mathrm{stat}}(\mathbf{x})+G(\mathbf{x}^{\uparrow},v_N)\}>N+yN^{1/3}\right)\\&+ \mathbb{P}^{\rho}\left( \max_{x\in
				[-\gamma_2 N,-1]}\{G^1_{\mathrm{stat}}(\mathbf{x})+G(\mathbf{x}^{\uparrow},v_N)\}>N+yN^{1/3}\right)+
				\mathbb{P}^{\rho}\left( |Z^{\mathbf{0}\rightarrow v_N}| \geq
				(\gamma_1\wedge \gamma_2) N
		\right)\\
		&\leq C_4'e^{-c_4'y^{3/2}}+ C_5'e^{-c_5'y^{3/2}}+C_6'e^{-c_6'N^3}\\
		&\leq Ce^{-c y^{3/2}}.
				\end{align*}
				The first inequality is a straightforward union bound. To get
				the second inequality, we have used Proposition
				\ref{upper_upper} and Theorem \ref{exit_time} respectively. To
				get the final inequality, we have used that $y\leq \delta_1
				N^{2/3}$.
			\end{proof}

			\section{Lower tail estimates}
		 The proof of (2) in Theorem \ref{tails_lower_lower} is
		straightforward by comparison to the point-to-point estimates.
		\begin{proof}[Proof of (2) in Theorem \ref{tails_lower_lower}]
				The proof is exactly the same as the proof of Lemma
				\ref{first_term}. One just needs to replace
				$r^2$ in the statement of Lemma
				\ref{first_term} by $y$ and then reproduce the
				proof verbatim.
			\end{proof}
			We now come to the proof of (1) in Theorem
			\ref{tails_lower_lower}. For the remainder of the section, we
			will be working with $\rho=\frac{1}{2}$.
			We will now use the point-to-line representation of the stationary model. This helps us set up for a direct application of
			Proposition \ref{lower_tail_lower_bound_ingredient} which gives us
			the required lower bound. As we mentioned earlier, the two
			representations of the stationary model are equivalent, and
			this can be shown by coupling both the representations to
			a separate boundary representation model. The coupling
			is described in detail in the appendix, and the following
			proposition follows directly from the coupling:
			\begin{proposition}
				\label{couple}
				For $v\in \mathbb{Z}^2_{\geq 0}$, the random variables
				$\{G^1_{\mathrm{stat}}(v)\}$ have the same joint
				distribution as the random variables
				$\{G^2_{\mathrm{stat}}(v)\}$.
			\end{proposition}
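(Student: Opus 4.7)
The plan is to realize both $G^1_{\mathrm{stat}}$ and $G^2_{\mathrm{stat}}$ as deterministic functionals of a single auxiliary stationary boundary model $G^3$ defined on a larger domain, and to conclude that the two induced fields on $\mathbb{Z}^2_{\geq 0}$ agree in joint distribution. Concretely, I would fix a large integer $M$ and let $G^3$ denote the boundary stationary LPP rooted at $w := (-M,-M)$ on the shifted quadrant $\{x \geq -M,\, y \geq -M\}$, with $\exp(1-\rho)$ weights on the horizontal boundary ray, $\exp(\rho)$ weights on the vertical boundary ray, and $\exp(1)$ weights in the interior. Both representations will be read off from $G^3$ using $\mathbf{0}$ as the reference point, in such a way that $M$ plays no role in the resulting joint law.

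To recover $G^1_{\mathrm{stat}}$, I would apply Burke's property at $\mathbf{0}$: the increments of $G^3$ along $\{y=0,\, x \geq 0\}$ above $\mathbf{0}$ are i.i.d.\ $\exp(1-\rho)$, those along $\{x=0,\, y \geq 0\}$ above $\mathbf{0}$ are i.i.d.\ $\exp(\rho)$, and together they are jointly independent of the $\exp(1)$ bulk weights on $\mathbb{Z}^2_{>0}$. Hence the field $v \mapsto G^3(v) - G^3(\mathbf{0})$ restricted to $\mathbb{Z}^2_{\geq 0}$ has precisely the joint law of $G^1_{\mathrm{stat}}$.

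To recover $G^2_{\mathrm{stat}}$, I would apply Burke's property along the down-right staircase tracing the anti-diagonal $\{x+y=0\}$: its horizontal and vertical increments are jointly independent exponentials with parameters $1-\rho$ and $\rho$ respectively, and are independent of the bulk weights on $\{x+y>0\}$. Therefore the process $u \mapsto G^3(u) - G^3(\mathbf{0})$ on $\{x+y=0\}$ is distributionally identical to the weight function $T$ in Definition \ref{second_representation}. Since every up-right path from $w$ to a point $v$ with $x+y \geq 0$ crosses the anti-diagonal, decomposing the passage time at this crossing yields
\begin{equation*}
G^3(v) - G^3(\mathbf{0}) \;=\; \max_{u \in \{x+y=0\}} \bigl\{\,[G^3(u) - G^3(\mathbf{0})] + G(u,v)\,\bigr\},
\end{equation*}
where $G(u,v)$ is computed from the bulk $\exp(1)$ weights alone. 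This matches the definition of $G^2_{\mathrm{stat}}(v)$, giving the claimed joint distributional identity.

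The principal obstacle will be the Burke-type identification for the increments along the anti-diagonal: one must verify that they package into a two-sided random walk with the exact joint law of $T$ and are furthermore independent of the bulk weights $\omega''|_{\{x+y>0\}}$ used in the $G^2_{\mathrm{stat}}$ construction. This follows from the standard vertex-level Burke identity by induction along the staircase, but requires careful parameter bookkeeping, separating the horizontal boundary contribution (which supplies the $\tau_{(s,-s)}$'s, distributed as $\exp(1-\rho)$) from the vertical one (which supplies the $\psi_{(s,-s)}$'s, distributed as $\exp(\rho)$), and also verifying that any discrepancy at the corner $\mathbf{0}$ disappears after subtracting $G^3(\mathbf{0})$.
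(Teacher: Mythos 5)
Your proposal is correct and follows essentially the same route as the paper's own argument (Proposition \ref{coupling_coupling}): both construct a single auxiliary boundary stationary model rooted at a far negative corner, then use Burke's property to read off the $G^1_{\mathrm{stat}}$ field from increments along the coordinate axes at $\mathbf{0}$ and the $G^2_{\mathrm{stat}}$ field from increments along the anti-diagonal staircase together with the last-crossing decomposition. The ``careful parameter bookkeeping'' you flag as the main obstacle is exactly what the paper defers to Lemma 4.2 of \cite{BCS06}.
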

			Recall the notation $\overline{\mathbb{P}}^{\rho}(\cdot)$ for the
			probabilities in the point-to-line stationary model. To be able to reduce a calculation in the point-to-line
			representation to one in the boundary representation, we will
			need the following result which is proved by the coupling
			argument in the appendix:
			\begin{proposition}
				\label{geodesic_coupling}
			 	In the point-to-line stationary representation, consider the
				non zero coordinate $q_N$ of the point where the stationary geodesic to $v_N$ last meets the coordinate
				axes, the convention being that $q_N$ is positive if the point is on
				the $x$ axis and is negative otherwise. Then we have that the
				distribution of $q_N$ is the same as the
				distribution of the exit time in the boundary
				representation. That is,
				\begin{displaymath}
					q_N \stackrel{\mathcal{D}}{=} Z^{\mathbf{0}\rightarrow
					v_N},
				\end{displaymath}
				where the distribution of the former is considered under
				$\overline{\mathbb{P}}^{\rho}(\cdot)$ and the latter under
				$\mathbb{P}^{\rho}(\cdot)$.
			\end{proposition}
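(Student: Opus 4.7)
The plan is to proceed by leveraging the very same coupling used to establish Proposition \ref{couple} in the appendix. At a high level, both the boundary representation and the point-to-line representation can be built as deterministic functionals of a single ``separate boundary representation model'' constructed from independent exponential weights on $\mathbb{Z}^2_{\geq 0}$, independent weights of rates $1-\rho$ and $\rho$ on the positive $x$- and $y$-axes, and independent weights of rates $\rho$ and $1-\rho$ on the positive $y$- and $x$-axes reflected below the origin (i.e.\ along the antidiagonal). Under this joint construction, the two representations agree in the positive quadrant up to a translation, which is exactly how Proposition \ref{couple} is obtained.

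First I would make explicit the geometric content of this coupling. In the point-to-line model, any up-right path from a point $\mathfrak{t}=(-t,t)\in\{x+y=0\}$ to $v_N$ must pass through a unique first point on the coordinate axes: if $t>0$ the path enters the positive quadrant through $(0,j)$ for some $j\geq 0$, and if $t<0$ through $(i,0)$ for some $i\geq 0$. Under the coupling, the weights $\tau_{(s,-s)}$ and $\psi_{(s,-s)}$ collected along $\{x+y=0\}$ before the path enters the quadrant correspond, via the Burke-type telescoping in the construction of $T$, to the axial boundary weights $\omega'_{ie_1}$ and $\omega'_{je_2}$ of the boundary representation, with the last meeting point with the axes in one model playing the role of the exit point in the other.

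Next I would argue that, under this coupling, the (almost surely unique) stationary geodesic in the point-to-line model and the stationary geodesic in the boundary model coincide, as up-right paths in the positive quadrant, from the axes onward. This is because both geodesics maximize the same functional (the sum of interior $\exp(1)$ weights plus the induced axial boundary weights) over the same set of up-right paths, and the coupling identifies these two functionals. Consequently the last axial point visited by the point-to-line geodesic is identified with the point at which the boundary geodesic exits the axes, so that $q_N=Z^{\mathbf{0}\to v_N}$ as random variables on the coupled space, and in particular in distribution.

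The step I expect to require the most care is the bookkeeping that matches the telescoping definition of $T$ in Definition \ref{second_representation} with the independent axial weights of Definition \ref{first_representation}: one must check, using the independence and exponential marginals of $\tau$ and $\psi$, that the joint law of axial increments produced by the coupling is exactly that of the i.i.d.\ $\exp(1-\rho)$ weights on the $x$-axis and $\exp(\rho)$ weights on the $y$-axis. Once that identification is in place, the geodesic-level equality is a direct consequence of uniqueness of the maximizer, and Proposition \ref{geodesic_coupling} follows.
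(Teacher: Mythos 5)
Your proposal takes the same overall route as the paper: invoke the coupling of Proposition \ref{coupling_coupling}, argue that the two stationary geodesics coincide in the positive quadrant, and conclude that the last axial point in one model equals the exit point in the other. However, the justification you give for the crucial geodesic-coincidence step is not correct as stated, and it is here that the paper does something different and more robust. You assert that ``both geodesics maximize the same functional over the same set of up-right paths,'' but this is not literally true: in the boundary model the optimization is over up-right paths from $\mathbf{0}$ to $v_N$ in $\mathbb{Z}^2_{\geq 0}$ with weights $\omega'$ sitting on the axes, while in the point-to-line model the optimization is over $u\in\{x+y=0\}$ of $T(u)+G(u,v_N)$ with paths living in $\{x+y\geq 0\}$, weights $\omega''$ off that line, and the entire axial contribution collapsed into the single telescoped quantity $T(u)$. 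These are distinct optimization problems with distinct path spaces; what the coupling gives you is only that the two \emph{maxima} agree (both equal $G^1_{\mathrm{stat}}(-\vv{n},v_N)-G^1_{\mathrm{stat}}(-\vv{n},\mathbf{0})$), not that the objective functionals or feasible sets are the same, so ``uniqueness of the maximizer'' does not immediately transfer.

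The paper closes this gap in a cleaner way: it observes that the geodesic restricted to $\mathbb{Z}^2_{>0}$ can be reconstructed by a deterministic backward recursion from the stationary passage times $\{G_{\mathrm{stat}}(v):\mathbf{0}\leq v\leq v_N\}$ alone (at $v_N$, compare $G_{\mathrm{stat}}(v_N-\mathrm{e}_1)$ with $G_{\mathrm{stat}}(v_N-\mathrm{e}_2)$, step to the larger, and iterate). Since Proposition \ref{coupling_coupling} makes $G^1_{\mathrm{stat}}(v)=G^2_{\mathrm{stat}}(v)$ on this set, the interior portions of the two geodesics agree as point sets, and the last axial point is then read off identically in both models. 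An alternative repair that stays closer to your route would be to argue that, under the coupling, both geodesics must coincide (in $\mathbb{Z}^2_{>0}$) with the restriction of the a.s.\ unique geodesic of the auxiliary model from $-\vv{n}$ to $v_N$, because equality in the chain of inequalities bounding each passage time by $G^1_{\mathrm{stat}}(-\vv{n},v_N)-G^1_{\mathrm{stat}}(-\vv{n},\mathbf{0})$ forces the optimal paths to concatenate with segments of that geodesic; but as written your argument skips this and also gives a somewhat garbled description of the auxiliary model (the paper does not place independent $\exp(\rho)$, $\exp(1-\rho)$ weights along the antidiagonal; both $T$ and the axial $\nu'$-weights are defined as increments of the passage time of a single boundary model started at $(-n,-n)$, with the Burke property supplying the correct marginals).
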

			 Coming back to (1) in Theorem \ref{tails_lower_lower}, by
			invoking Proposition \ref{couple}, we equivalently need to show the
			following:

			\begin{theorem}
				\label{tails_lower_lower1}
				For any fixed $\delta_1\in (0,1)$, there exist constants $C,c$
				depending on $\delta_1$ such
			that for all $N\geq N_0$ and for all $y$ such that
			$\delta_1 N^{2/3}>y>0$, we have 
			\begin{displaymath}
				\overline{\mathbb{P}}^{\frac{1}{2}}(G^2_{\mathrm{stat}}(v_N)-N\leq
					-yN^{1/3})\geq Ce^{-cy^3}.
			\end{displaymath}
			\end{theorem}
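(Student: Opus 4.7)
In the point-to-line representation with $\rho = 1/2$, we have
\begin{displaymath}
G^2_{\mathrm{stat}}(v_N) = \max_{t \in \mathbb{Z}}\big\{T((t,-t)) + G((t,-t), v_N)\big\},
\end{displaymath}
where $T$ is a two-sided random walk with mean-zero i.i.d.\ increments $\tau - \psi$ of variance $8$, independent of the bulk weights $\omega''$ defining $G$. Since $v_N = (N/4, N/4)$, Proposition \ref{lower_tail_lower_bound_ingredient} applied with $n = N/4$ gives, for any fixed $K > 1$ and any $y$ with $Ky \leq \delta_2 (N/4)^{2/3}$, the bound
\begin{displaymath}
\overline{\mathbb{P}}^{1/2}\Big(\max_{t} G((t,-t),v_N) \leq N - KyN^{1/3}\Big) \geq e^{-c_0 K^3 y^3},
\end{displaymath}
after rescaling by $4^{1/3}$. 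We call this event $\mathcal{E}_G$; it is the sole source of the $e^{-cy^3}$ price in the final bound.

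The plan is to intersect $\mathcal{E}_G$ with further events, depending on the independent random walk $T$, that hold with positive constant probability. Naive separation fails since $\sup_t T((t,-t)) = +\infty$, so we introduce the cutoff $L := c_0 y^2 N^{2/3}$ (with $c_0$ of order $1$) and handle the bulk $|t|\leq L$ and the tail $|t| > L$ separately. On the bulk, Doob's $L^2$ maximal inequality for the martingale $T$ (and its reflection) yields
\begin{displaymath}
\overline{\mathbb{P}}^{1/2}\Big(\max_{|t|\leq L} T((t,-t)) \leq (K-1) y N^{1/3}\Big) \geq 1 - \frac{64\, c_0}{(K-1)^2},
\end{displaymath}
which is a positive constant once $K$ is chosen sufficiently large relative to $\sqrt{c_0}$. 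On the intersection with $\mathcal{E}_G$, the bulk contribution is at most $(K-1) y N^{1/3} + (N - K y N^{1/3}) = N - y N^{1/3}$, as desired.

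For the tail $|t| = r N^{2/3} > L$, Taylor expansion of the shape function gives $f(v_N - (t,-t)) \leq N - (4 - o(1)) r^2 N^{1/3}$, so $\{T + G > N - yN^{1/3}\}$ forces $T + (G - f) > (4r^2 - y) N^{1/3} \geq 3 r^2 N^{1/3}$ whenever $r \geq c_0 y^2 \geq y$ (which holds for $c_0 y \geq 1$). We partition into dyadic shells $r \in [2^{k-1} c_0 y^2,\, 2^k c_0 y^2]$, and apply Proposition \ref{point_to_line} on length-$N^{2/3}$ segments of the antidiagonal (valid by the slope-flexibility remark following the proposition) together with a Gaussian-type tail for $T$, bounding the failure probability in the $k$-th shell by $(2^k c_0 y^2) \exp(-c (2^k c_0 y^2)^3)$. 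Summing over $k$ gives the geometric bound $O\big(y^2 \exp(-c c_0^3 y^6)\big)$, which stays below $1/4$ for all $y \geq 1$; the case $y < 1$ is absorbed into the constants.

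Combining the three events via the independence of $T$ and $\omega''$ (the $G$-part of the tail bound depends only on $\omega''$ and the $T$-part only on $T$), we obtain
\begin{displaymath}
\overline{\mathbb{P}}^{1/2}\big(G^2_{\mathrm{stat}}(v_N) \leq N - yN^{1/3}\big) \geq e^{-c_0 K^3 y^3}\cdot c_1 \cdot c_2 = C e^{-c y^3},
\end{displaymath}
with $c = c_0 K^3$. The main obstacle is the tail step: the quadratic decay $4 r^2 N^{1/3}$ of the shape function must dominate both the Brownian fluctuations of $T$ (scale $\sqrt{|t|}$) and the $N^{1/3}$-scale fluctuations of $G - f$ uniformly across all dyadic shells, with failure kept below $1$ down to $y$ of order $1$. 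The cutoff $L \asymp y^2 N^{2/3}$ is forced on us: it is exactly the transverse scale at which the bulk maximum of $T$ matches the target drop $y N^{1/3}$ of the passage time, and beyond which the shape-function parabola beats all fluctuations.
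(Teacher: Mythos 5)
Your proposal is correct in its overall structure and takes a genuinely different route from the paper for the tail $|t|>L$. Both proofs draw the $e^{-cy^3}$ factor from the same source, namely Proposition~\ref{lower_tail_lower_bound_ingredient} applied to $\max_t G((t,-t),v_N)$, and both rely on the independence of the boundary walk $T$ from the bulk weights; the paper controls the bulk maximum of $T$ via Donsker, you via Doob's $L^2$ maximal inequality, a minor and harmless variation. The real divergence is in the tail: the paper bounds the probability that the stationary geodesic exits outside $[-L,L]$ by invoking Theorem~\ref{exit_time} through the coupling of Proposition~\ref{geodesic_coupling} (which in turn rests on the Burke property in the appendix), whereas you estimate the contribution of $|t|>L$ directly by dyadic-shell decomposition, Proposition~\ref{point_to_line} for $G-f$, and Bernstein-type tails for $T$. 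Your version is more self-contained---it bypasses the exit-time theorem and the geodesic-coupling machinery entirely---at the cost of redoing a moderate-deviation computation that the exit-time theorem packages; the paper's version is shorter given Theorem~\ref{exit_time} is already in hand.

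Two points deserve sharpening. First, your criterion that the aggregate tail failure ``stays below $1/4$'' is not the right comparison: since the event $\mathcal{E}_G$ lives on $\omega''$ and has probability only of order $e^{-c_0K^3y^3}$, the $\omega''$-dependent part of the tail failure (the $G-f$ shell events) must be bounded by a small fraction of $e^{-c_0K^3y^3}$, not merely by a constant. Your actual bound $O\bigl(y^2 e^{-cc_0^3 y^6}\bigr)$ does satisfy this for $y\geq y_0$ because $y^6$ dominates $y^3$, so the conclusion stands, but the write-up should compare against $e^{-c_0K^3y^3}$ rather than against $1/4$. Second, Proposition~\ref{point_to_line} requires the slopes to stay bounded away from the axis directions, which fails in the outermost shells as $|t|\to N/4$; those shells need a separate crude large-deviation estimate (analogous to the estimate \eqref{crude} in the paper's exit-time proof), though this is a routine matter since there $f(v_N-(t,-t))$ is macroscopically below $N$.
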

			
			Note that for the point-to-line representation, we have the
			following analogue of Lemma \ref{expecatation_x^2/n}:

			\begin{lemma}
				\label{expectation}
				If $-\frac{N}{4}<t< \frac{N}{4}$, we have that
				\begin{displaymath}
					f(v_N-(t,-t))=N-4\frac{t^2}{N}-N\mathcal{O}\left(
					(\frac{t}{N})^4
					\right),
				\end{displaymath}
				where the $\mathcal{O}\left(
					(\frac{t}{N})^4
					\right)$ is a term that is strictly positive for all $t$ in the given
					range.
			\end{lemma}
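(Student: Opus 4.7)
The plan is to give a direct Taylor expansion, exactly analogous to the proof of Lemma \ref{expecatation_x^2/n}. For $\rho=\frac{1}{2}$, the characteristic point is $v_N=(N/4,N/4)$, so by definition of $f$ we have
\begin{align*}
f(v_N-(t,-t)) &= \bigl(\sqrt{N/4-t}+\sqrt{N/4+t}\bigr)^2 \\
&= \tfrac{N}{2} + 2\sqrt{N^2/16 - t^2} \\
&= \tfrac{N}{2} + \tfrac{N}{2}\sqrt{1 - 16t^2/N^2}.
\end{align*}
This is the point where the hypothesis $|t|<N/4$ enters, since it forces the argument $16t^2/N^2$ of the square root to lie in $[0,1)$, so the Taylor series of $\sqrt{1-u}$ converges.

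Next I would apply the expansion $\sqrt{1-u}=1-\tfrac{u}{2}-\sum_{k\geq 2} a_k u^k$ with $a_k>0$ for every $k\geq 2$ (this is the key sign observation: all coefficients beyond the linear term are strictly negative). Substituting $u=16t^2/N^2$ gives
\begin{displaymath}
\tfrac{N}{2}\sqrt{1-16t^2/N^2} = \tfrac{N}{2} - 4\tfrac{t^2}{N} - \tfrac{N}{2}\sum_{k\geq 2} a_k \bigl(16t^2/N^2\bigr)^k,
\end{displaymath}
and adding the first $N/2$ back yields $f(v_N-(t,-t)) = N - 4t^2/N - R(t,N)$, where $R(t,N)=\tfrac{N}{2}\sum_{k\geq 2} a_k (16t^2/N^2)^k$.

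It remains to read off the two assertions of the lemma. Since the leading $k=2$ term of $R$ is a positive constant times $N\cdot(t/N)^4$ and all higher terms are of smaller order in $t/N$, we have $R(t,N) = N\cdot \mathcal{O}((t/N)^4)$. Moreover, every summand in $R$ is non-negative, with the $k=2$ summand strictly positive whenever $t\neq 0$, so the error is strictly positive on the punctured range (and it vanishes only at $t=0$, which is the standard interpretation of the statement). There is no real obstacle here; the only small subtlety is making sure to record that all Taylor coefficients of $\sqrt{1-u}$ past the linear term share the same sign, which is what underlies the strict concavity of the shape function emphasized in the introduction and is what makes the quadratic loss $-4t^2/N$ a genuine upper bound (modulo higher-order corrections) on $f(v_N-(t,-t))$.
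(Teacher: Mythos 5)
Your proof is correct and takes the same route the paper indicates: rewrite $f(v_N-(t,-t))=\bigl(\sqrt{N/4-t}+\sqrt{N/4+t}\bigr)^2=\tfrac{N}{2}+\tfrac{N}{2}\sqrt{1-16t^2/N^2}$ and Taylor-expand, using that all coefficients of $\sqrt{1-u}$ beyond the linear term share a sign to get both the $-4t^2/N$ leading correction and the positivity of the remainder. The paper's one-line proof says exactly this; your write-up simply supplies the details, including the correct (and harmless) caveat that the error vanishes at $t=0$.
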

			\begin{proof}
				The proof is a straightforward computation of the Taylor
				expansion of $\left(
				\sqrt{\frac{N}{4}-t}+\sqrt{\frac{N}{4}+t} \right)^2$.
			\end{proof}
			The term $f(v_N-(t,-t))$ in Lemma \ref{expectation} can be
			heuristically thought of as the expected passage time from $0$ to
			$v_N$ if one only maximises over paths which leave the line
			$\{x+y=0\}$ at the point $(t,-t)$. Indeed, there is no term here
			analogous to $g(x)$ in Lemma \ref{expecatation_x^2/n}
			because each random variable on the boundary line $\{x+y=0\}$ has
			mean $0$ for the case $\rho=\frac{1}{2}$. Recall that we use the
			notation $\mathfrak{t}$ for $(-t,t)$. We split the proof of
			Theorem \ref{tails_lower_lower1} into two cases-- the proof for
			the case
			$y\geq \frac{1}{2}N^{1/6}$ is completed in Proposition
			\ref{lower_lower} whereas the case $y<\frac{1}{2} N^{1/6}$
			requires an extra argument.
			\begin{proposition}
				\label{lower_lower}
				For $\delta_1\in (0,1)$, there exist
				positive constants $C_,c,N_0,y_0$ depending on
				$\delta_1$ such that for all $N\geq
				N_0$ and $y_0<y<\delta_1 N^{2/3}$, we have
				\begin{displaymath}
\overline{\mathbb{P}}^{\frac{1}{2}}\left( \max_{t\in
					[-(y^2
					N^{2/3}\wedge
					\frac{N}{4}),y^2
					N^{2/3}\wedge
					\frac{N}{4}]}\{T(\mathfrak{t})+G(
					\mathfrak{t},v_N)\}<N-yN^{1/3}\right)\geq
				Ce^{-cy^3}.
			\end{displaymath}
			\end{proposition}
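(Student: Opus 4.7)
The decisive structural feature of the point-to-line representation is that the boundary weight $T(\mathfrak{t})$, depending only on the $\tau_v,\psi_v$ on $\{x+y=0\}$, is independent of the interior passage time $G(\mathfrak{t},v_N)$, depending only on $\omega''$. For $\rho=1/2$ the walk $T$ is a two-sided mean-zero random walk with step variance $\sigma^2=8$. The strategy is to split the event in the proposition as the intersection of two independent events: one controlling the fluctuations of $T$ via a Brownian small-ball estimate, and one controlling $\max_t G(\mathfrak{t},v_N)$ directly via Proposition \ref{lower_tail_lower_bound_ingredient}.

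Concretely, I would fix $\alpha=(1/\delta_1-1)/2>0$ (so that $(1+\alpha)\delta_1<1$), set $R=y^2N^{2/3}\wedge N/4$, and consider the events
\begin{align*}
A &= \Big\{\max_{|t|\leq R} T(\mathfrak{t}) \leq \alpha y N^{1/3}\Big\}, \\
B &= \Big\{\max_{t} G(\mathfrak{t},v_N) \leq N - (1+\alpha) y N^{1/3}\Big\}.
\end{align*}
On $A\cap B$, for every $|t|\leq R$ one has $T(\mathfrak{t})+G(\mathfrak{t},v_N)\leq \alpha yN^{1/3}+N-(1+\alpha)yN^{1/3}=N-yN^{1/3}$, which is exactly the event whose probability is being lower-bounded, and by independence $\overline{\mathbb{P}}^{\frac{1}{2}}(A\cap B)=\overline{\mathbb{P}}^{\frac{1}{2}}(A)\,\overline{\mathbb{P}}^{\frac{1}{2}}(B)$.

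For $B$: since $v_N=(N/4,N/4)$, I would apply Proposition \ref{lower_tail_lower_bound_ingredient} with $n=N/4$ and with the ingredient's variable $y_{\mathrm{ing}}=(1+\alpha)\cdot 4^{1/3}\cdot y$; then $4n-y_{\mathrm{ing}}n^{1/3}=N-(1+\alpha)yN^{1/3}$, and the proposition yields $\overline{\mathbb{P}}^{\frac{1}{2}}(B)\geq e^{-c_2 y^3}$. The range constraint $y_{\mathrm{ing}}<\delta_2 n^{2/3}$ reduces to $(1+\alpha)y<\delta_2 N^{2/3}/4$, which holds for $\delta_2\in(0,4)$ sufficiently close to $4$ because $y<\delta_1 N^{2/3}$ and $(1+\alpha)\delta_1<1$.

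For $A$: a direct computation shows $\alpha yN^{1/3}$ is within a constant factor of $\sqrt{R\sigma^2}$ uniformly in the allowed range, where the two regimes $y<N^{1/6}/2$ (in which $R=y^2N^{2/3}$ and $\sqrt{R\sigma^2}=\sqrt{8}\,yN^{1/3}$) and $y\geq N^{1/6}/2$ (in which $R=N/4$ and $yN^{1/3}\gtrsim N^{1/2}$) must be checked separately. Hence by the functional central limit theorem, for $N\geq N_0$ one has $\overline{\mathbb{P}}^{\frac{1}{2}}(A)\geq c_1(\alpha)>0$, bounded below by a Brownian small-ball probability of the form $\mathbb{P}(\sup_{s\in[-1,1]}\sqrt{8}\,B_s\leq c(\alpha))$. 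Combining, $\overline{\mathbb{P}}^{\frac{1}{2}}(A\cap B)\geq c_1(\alpha)e^{-c_2 y^3}\geq Ce^{-cy^3}$, which proves the proposition (after absorbing the $y\leq y_0$ regime into constants and taking $N_0$ large enough to make the Brownian approximation quantitative). The main obstacle is making the small-ball bound for $A$ uniform in $N$ and $y$ across both regimes; this is routine but nontrivial because $\alpha$ can be forced arbitrarily small as $\delta_1\uparrow 1$, making $c_1(\alpha)$ small though still positive and a function only of $\delta_1$, which is exactly what the proposition permits.
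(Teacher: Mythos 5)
Your proof is correct and takes essentially the same route as the paper: split the event into an independent pair (a boundary small-ball event for the random walk $T$, and a lower-tail event for the flat point-to-line passage time), apply the functional CLT to the first and Proposition \ref{lower_tail_lower_bound_ingredient} to the second, and multiply. The only differences are cosmetic — the paper splits $yN^{1/3}$ as $(1-\delta_1)yN^{1/3} + (2-\delta_1)yN^{1/3}$ and chooses $\delta_2 = 4\delta_1(2-\delta_1)$, while you use $\alpha$ and $1+\alpha$ with $\alpha = (1/\delta_1 - 1)/2$; and you treat the two regimes $y^2N^{2/3}\lessgtr N/4$ uniformly via $R$, while the paper states them as separate cases before making the same observation you do (that shrinking the normalization only helps the small-ball bound).
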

			\begin{proof} 		
			 Consider the case $y^2 N^{2/3}<\frac{N}{4}$ for now. Recall the
			 notation $G_0(\cdot)$ from the
			 statement after \eqref{point_to_line_defn}. By increasing $y_0$ if necessary, we have the
				following:
			\begin{align}
				&\overline{\mathbb{P}}^{\frac{1}{2}}\left( \max_{t\in
				[-y^2
				N^{2/3},y^2
				N^{2/3}]}\{T(\mathfrak{t})+G(
				\mathfrak{t},v_N)\}<N-yN^{1/3}\right)\nonumber\\
				&\geq
				\overline{\mathbb{P}}^{\frac{1}{2}}\left(\left\{\max_{t\in
				[-y^2
				N^{2/3},y^2
				N^{2/3}]}\left\{T(\mathfrak{t})\right\}<(1-\delta_1)yN^{1/3}\right\}\bigcap\left\{\max_{t\in
				[-y^2
				N^{2/3},y^2
				N^{2/3}]}\left\{G(\mathfrak{t},v_N)\right\}<N-y(2-\delta_1)N^{1/3}\right\}\right)\nonumber\\
					&=
			\overline{\mathbb{P}}^{\frac{1}{2}}\left(\max_{t\in
				[-y^2
				N^{2/3},y^2
				N^{2/3}]}\left\{T(\mathfrak{t})\right\}<(1-\delta_1)yN^{1/3}\right)\overline{\mathbb{P}}^{\frac{1}{2}}\left(\max_{t\in
				[-y^2
				N^{2/3},y^2
				N^{2/3}]}\left\{G(\mathfrak{t},v_N)\right\}<N-y(2-\delta_1)N^{1/3}\right)\nonumber\\
				&\geq
			\overline{\mathbb{P}}^{\frac{1}{2}}\left(\max_{t\in
				[-y^2
				N^{2/3},y^2
				N^{2/3}]}\left\{T(\mathfrak{t})\right\}<(1-\delta_1)yN^{1/3}\right)\mathbb{P}\left(G_0(v_N)<N-y(2-\delta_1)N^{1/3}\right).			\label{lower_tail_lower_bound}
			\end{align}
			The first equality in the above series of expressions follows
			because of the independence of the boundary and
non-boundary random variables. Note that the first term in \eqref{lower_tail_lower_bound} can
			be bounded as follows
			\begin{align}
			\overline{\mathbb{P}}^{\frac{1}{2}}\left(\max_{t\in
				[-y^2
				N^{2/3},y^2
				N^{2/3}]}\left\{T(\mathfrak{t})\right\}<(1-\delta_1)yN^{1/3}\right)&=
			\overline{\mathbb{P}}^{\frac{1}{2}}\left(\max_{t\in
				[-y^2
				N^{2/3},y^2
				N^{2/3}]}\left\{\frac{T(\mathfrak{t})}{\sqrt{y^2N^{2/3}}}\right\}<1-\delta_1\right)\nonumber\\
				&\rightarrow
		\mathbf{P}\left(\max_{t\in[-1,1]}\left\{B_t\right\}<1-\delta_1\right)>c_1>0
		\mathrm{~as~} N\rightarrow \infty.
		\label{first_term_lower_lower}
	\end{align}
	The last line in the above expression follows by Donsker's theorem as
	under $\mathbf{P}$, $B_t$ has the law of the two-sided standard Brownian motion
	started from $0$. Note that the final term is some constant and has no
	dependence on $y$ or $N$, and the estimate is uniform as
	$y>y_0>0$. Finally, to prove the proposition, we need to
	handle the second term in \eqref{lower_tail_lower_bound}, and we will be
	using Proposition \ref{lower_tail_lower_bound_ingredient} with
	$\delta_2=4\delta_1(2-\delta_1)$ for this purpose.
	Indeed, Proposition \ref{lower_tail_lower_bound_ingredient} immediately
	implies the following for all large $N$:
	\begin{align}
		\mathbb{P}\left(G_0(v_N)<N-y(2-\delta_1)N^{1/3}\right) \geq C_2 e^{-c_2 y^3}\label{using_the_ingredient}.
	\end{align}
	Note that we needed to restrict to the case $\rho=\frac{1}{2}$ to use
	Proposition \ref{lower_tail_lower_bound_ingredient}. On combining \eqref{first_term_lower_lower} and
	\eqref{using_the_ingredient} by using \eqref{lower_tail_lower_bound}, we have what we needed, namely
	\begin{displaymath}
		\overline{\mathbb{P}}^{\frac{1}{2}}\left( \max_{t\in
		[-y^2
		N^{2/3},y^2
		N^{2/3}]}\{T(\mathfrak{t})+G(\mathfrak{t},v_N)\}<N-yN^{1/3}\right)\geq
				Ce^{-cy^3}.
	\end{displaymath}
	Note that the case $y^2 N^{2/3}\geq\frac{N}{4}$ follows from the above
	because when we lower bound the term
	\begin{displaymath}
		\overline{\mathbb{P}}^{\frac{1}{2}}\left( \max_{t\in
	[-\frac{N}{4},\frac{N}{4}]}\{T(\mathfrak{t})+G(
				\mathfrak{t},v_N)\}<N-yN^{1/3}\right)
	\end{displaymath}
	by using the method in \eqref{lower_tail_lower_bound}, then the bound in
	\eqref{using_the_ingredient} stays the same, whereas the bound in \eqref{first_term_lower_lower} only becomes better. Indeed for $y^2
	N^{2/3}\geq \frac{N}{4}$, we have
	that	\begin{displaymath}
		\overline{\mathbb{P}}^{\frac{1}{2}}\left(\max_{t\in
		[-\frac{N}{4},\frac{N}{4}]}\left\{T(\mathfrak{t})\right\}<(1-\delta_1)yN^{1/3}\right)\geq \overline{\mathbb{P}}^{\frac{1}{2}}\left(\max_{t\in
		[-\frac{N}{4},\frac{N}{4}]}\left\{\frac{T(\mathfrak{t})}{\sqrt{N/4}}\right\}<1-\delta_1\right),
	\end{displaymath}
	and the result follows by using Donsker's theorem as in
	\eqref{first_term_lower_lower}.
\end{proof}
\begin{remark}
	\label{difficulties}
	 The same approach does not work directly for general $\rho$ because for
	 $\rho\neq \frac{1}{2}$, the weight function $T$ on the line
	 $\{x+y=0\}$ in the point-to-line representation is a random walk with non-zero drift which causes the
	 Brownian estimate in \eqref{lower_tail_lower_bound} to no longer
	 work. To use the same
	 approach for general $\rho$, one would need to obtain a point-to-line estimate,
	 similar to Proposition \ref{lower_tail_lower_bound_ingredient}, adjusted
	 to included the drift term on the line.
\end{remark}
We now complete the proof of Theorem \ref{tails_lower_lower1}.

\begin{proof}[Proof of Theorem \ref{tails_lower_lower1}]
	Note that we already proved the result for the case
	$y^2\geq \frac{N^{1/3}}{4}$ in Proposition \ref{lower_lower}, so we
	restrict to the case $y^2<\frac{N^{1/3}}{4}$. Also, the
	case $0<y\leq y_0$ can be handled by adjusting $C,c$. For the
	point-to-line representation, let $A_1$ denote the event that the
	stationary geodesic to $v_N$ does not intersect the line segment
	$\{\mathfrak{t}: t\in [-y^2 N^{2/3},y^2 N^{2/3}]\}$. Similarly, let $A_2$ the
	event that the stationary geodesic to $v_N$ does not intersect the
	region $\{\mathbf{x}:x\in [-y^2N^{2/3},y^2 N^{2/3}]\}$. Observe that we
	have
	\begin{align}
	\overline{\mathbb{P}}^{\frac{1}{2}}(G^2_{\mathrm{stat}}(v_N)-N\leq
					-yN^{1/3})&\geq  \overline{\mathbb{P}}^{\frac{1}{2}}\left( \max_{t\in
					[-y^2
					N^{2/3},y^2
					N^{2/3}]}\{T(\mathfrak{t})+G(
					\mathfrak{t},v_N)\}<N-yN^{1/3}\right)
					-\overline{\mathbb{P}}^{\frac{1}{2}}(A_1)\nonumber\\
					&\geq C_1e^{-c_1y^3}
					-\overline{\mathbb{P}}^{\frac{1}{2}}(A_2)\nonumber\\
					&=C_1e^{-c_1y^3}-\mathbb{P}^{\frac{1}{2}}\left(
					|Z^{\mathbf{0}\rightarrow v_N}|\geq y^2 N^{2/3}
					\right)\nonumber\\
					&\geq C_1e^{-c_1y^3} - C_2 e^{-c_2 y^6}\nonumber\\
					&\geq C e^{-c y^3}
		\end{align}
		The first inequality is a simple union bound. The second inequality
		follows by observing that $A_1\subseteq A_2$ along with applying
		Proposition \ref{lower_lower}. The first equality follows by
		applying Proposition \ref{geodesic_coupling}, and the third
		inequality in an immediate application of Theorem \ref{exit_time}.
\end{proof}

\section{Appendix}
We first give the proof of Lemma
\ref{mgf_converges} and Lemma \ref{mgf_converges1}.
		\begin{proof}[Proof of Lemma \ref{mgf_converges}]
				Using the explicit form of the moment generating function for
				the exponential distribution, we have that
				\begin{displaymath}
					\mathbb{E}^{\rho}\left[
			\exp\left(\frac{\lambda
			(G^1_{\mathrm{stat}}(\mathbf{\overline{x}})-\frac{\overline{x}}{1-\rho})}{\frac{\sqrt{r+1}N^{1/3}}{1-\rho}}\right)\right]
			 =\left(
			 \frac{1}{1-\frac{\lambda}{\sqrt{\overline{x}}}}e^{-\frac{\lambda}{\sqrt{\overline{x}}}}
			\right)^{\overline{x}}.
				\end{displaymath}
				Now, just note that there is a constant $\delta_0\in (0,1)$ such that the function $\left(
				\frac{1}{1-\frac{y}{n}}e^{-\frac{y}{n}}
			\right)^{n^2}$ is at most $e^{C^*y^2}$ if
			$\frac{y}{n}<\delta_0$. This can be seen right away by taking a logarithm and doing
			a Taylor expansion.
	\end{proof}

	\begin{proof}[Proof of Lemma \ref{mgf_converges1}]
				We will directly use Lemma \ref{mgf_converges}. Note
				that
				$\frac{\lambda}{\sqrt{\overline{x}}}=\frac{C_1}{C^*}\frac{r^{3/2}}{\sqrt{r+1}N^{1/3}}\leq
				\frac{C_1}{C^*}\frac{r}{N^{1/3}}$, where $C_1$ depends only on
				$\rho$. Hence, by Lemma \ref{mgf_converges}, we just require
				that $\frac{C_1}{C^*}\gamma_1\leq \delta_0$ which can
				be arranged by choosing $C^*$ large enough because $\delta_0\rightarrow 1$ as
				$C^*\rightarrow \infty$.
			\end{proof}
	Recall the notation $\vv{n}$ for $(n,n)$. We now describe the coupling which implies Proposition \ref{couple} and
	Proposition \ref{geodesic_coupling}.

	\begin{proposition}
		\label{coupling_coupling}
		For each fixed $n>0$, there exists a coupling between the two
		stationary representations such that for each $v\in
		\mathbb{Z}^2_{\geq 0}$
		with $v\leq \vv{n}$, we have 
		$G^1_{\mathrm{stat}}(v)=G^2_{\mathrm{stat}}(v)$. 
	\end{proposition}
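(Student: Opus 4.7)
The plan is to construct both stationary representations as deterministic functionals of a single auxiliary boundary stationary LPP whose corner has been translated to $(-n,-n)$, and then to deduce the equality on $\{v \leq \vv{n}\}$ from a one-line path-decomposition. Let $\{\tilde{\omega}_v\}$ denote the weights of such an auxiliary model---i.e., $\tilde{\omega}_{(-n,-n)}=0$, $\tilde{\omega}_{(-n+k,-n)}\sim \mathrm{exp}(1-\rho)$, $\tilde{\omega}_{(-n,-n+k)}\sim \mathrm{exp}(\rho)$, and $\tilde{\omega}_v\sim \mathrm{exp}(1)$ independently on the strict interior $\{x,y>-n\}$---and let $\tilde{G}(v)$ denote its point-to-point passage times from $(-n,-n)$.

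To recover the boundary representation of Definition \ref{first_representation}, I set $\omega'_v:=\tilde{\omega}_v$ on $\mathbb{Z}^2_{>0}$ and use the axis increments $\omega'_{(k,0)}:=\tilde{G}((k,0))-\tilde{G}((k-1,0))$ and $\omega'_{(0,k)}:=\tilde{G}((0,k))-\tilde{G}((0,k-1))$. The Burke property of stationary LPP, applied to the down-right staircase that runs up the $y$-axis and then right along the $x$-axis, makes these axis increments jointly independent exponentials of the correct rates; since they are measurable with respect to $\tilde{\omega}$ restricted to $\{x\leq 0\}\cup\{y\leq 0\}$, they are independent of the bulk field $\tilde{\omega}|_{\mathbb{Z}^2_{>0}}$. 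A direct induction using $\tilde{G}(v)=\max(\tilde{G}(v-\mathrm{e}_1),\tilde{G}(v-\mathrm{e}_2))+\tilde{\omega}_v$ in the interior then identifies $G^1_{\mathrm{stat}}(v):=\tilde{G}(v)-\tilde{G}(\mathbf{0})$ with the boundary-representation passage time for the weights $\omega'$. For the point-to-line representation of Definition \ref{second_representation}, I set $\omega''_v:=\tilde{\omega}_v$ on $\{x+y>0\}\cap\{x,y>-n\}$ (extended by fresh independent $\mathrm{exp}(1)$ variables on the rest of $\{x+y>0\}$), $\omega''_v:=0$ on $\{x+y=0\}$, and $T((t,-t)):=\tilde{G}((t,-t))-\tilde{G}(\mathbf{0})$ for $|t|\leq n$ (extended outside by an independent two-sided $\tau-\psi$ walk). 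Burke applied to the antidiagonal staircase $(t,-t)\to(t+1,-t)\to(t+1,-t-1)\to\cdots$ makes the $T$-increments i.i.d.\ copies of $\tau-\psi$; since $T|_{[-n,n]}$ is measurable with respect to $\tilde{\omega}|_{\{x+y\leq 0\}}$ it is independent of $\omega''$, so $(\omega'',T)$ has the joint law demanded by Definition \ref{second_representation}.

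For the equality on $\{v\leq\vv{n}\}$ under this coupling, observe that every up-right path from $(-n,-n)$ to such $v$ increases $x+y$ by exactly one at each step and hence crosses $\{x+y=0\}$ at a unique lattice point $(t,-t)$ with $-v_y\leq t\leq v_x$; the constraint $v\leq\vv{n}$ forces $|t|\leq n$. Splitting any such path at its crossing and using $\omega''_{(t,-t)}=0$ to convert $\tilde{\omega}$-weights into $\omega''$-weights along the second segment yields
\[
\tilde{G}(v)-\tilde{G}(\mathbf{0})\;=\;\max_{-v_y\leq t\leq v_x}\bigl[T((t,-t))+G_{\omega''}((t,-t),v)\bigr],
\]
whose left-hand side is $G^1_{\mathrm{stat}}(v)$ by the previous paragraph and whose right-hand side is $G^2_{\mathrm{stat}}(v)$ by definition.

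The main obstacle is the simultaneous Burke bookkeeping: one must verify that the axis and antidiagonal increments of $\tilde{G}$ have the correct i.i.d.\ exponential and $\tau-\psi$ laws, and that these boundary data are jointly independent of the bulk $\tilde{\omega}$-field on their respective complements---this is what ensures that both $\omega'$ and $(\omega'',T)$ carry the marginal laws demanded by Definitions \ref{first_representation} and \ref{second_representation}. All three facts follow from the classical statement that along any down-right staircase in stationary exponential LPP the horizontal and vertical passage-time increments are mutually independent exponentials, combined with the observation that the axis and antidiagonal values of $\tilde{G}$ are measurable with respect to portions of $\tilde{\omega}$ disjoint from where each representation reuses the bulk field; the path-decomposition step above is then routine.
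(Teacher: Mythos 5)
Your proof is correct and follows essentially the same route as the paper's: both build a single auxiliary boundary stationary model with corner at $(-n,-n)$, recover the two representations by taking increments of its passage time along the coordinate axes and along the antidiagonal respectively, invoke the Burke property for the marginal laws, and obtain the identity $G^1_{\mathrm{stat}}(v)=G^2_{\mathrm{stat}}(v)$ by splitting optimal paths across $\{x+y=0\}$. You spell out the Burke bookkeeping and the path-decomposition step in somewhat more detail than the paper, which instead cites Lemma 4.2 of \cite{BCS06} and Lemmas 3.3--3.4 of \cite{SS19}.
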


	\begin{proof}
		The coupling proceeds by a version of the Burke property for the boundary representation of stationary LPP (Lemma 4.2 in
	\cite{BCS06}). To be precise,
	construct a boundary stationary model with its origin shifted to the
	point $(-n,-n)$; call the associated environment on
	$-\vv{n}+\mathbb{Z}^2_{\geq 0}$ as $\nu$, and let $G^1_{\mathrm{stat}}(
	-\vv{n},\cdot)$
	denote the associated stationary passage times. From $\nu$, derive the
	environment $\nu'$ on $\mathbb{Z}^2_{\geq 0}$ as follows:
	\begin{displaymath}
		\nu'_v=
		\begin{cases}
			\nu_v, & \text{ for } v\in \mathbb{Z}^2_{>0}\\
			G^1_{\mathrm{stat}}\left( -\vv{n},i\mathrm{e}_1 \right)-G^1_{\mathrm{stat}}\left(
			-\vv{n},(i-1)\mathrm{e}_1 \right), & \text{ for }
			v=i\mathrm{e}_1\\
				G^1_{\mathrm{stat}}\left( -\vv{n},j\mathrm{e}_2 \right)-G^1_{\mathrm{stat}}\left(
			-\vv{n},(j-1)\mathrm{e}_2 \right), & \text{ for }
			v=j\mathrm{e}_2\\
			0 \text{ for } v=\mathbf{0}.
		\end{cases}
	\end{displaymath}
	Use $\nu'$ to define a boundary stationary model on
	$\mathbb{Z}^2_{\geq 0}$, and use $G^1_{\mathrm{stat}}(\cdot)$ for the
	associated stationary passage times. We now derive a point-to-line stationary model
	from $\nu$. Recall that we only want to define $G^2_{\mathrm{stat}}(v)$
	for $v\leq \vv{n}$ and we need to define the environment accordingly.
	Define the environment $\nu''$ to be the same as $\nu$ in region
	$\{x+y> 0\} \cap \{(x,y)\leq \vv{n}\}$. Define the weight function $T$
	on $\{x+y=0\}\cap \{(x,y)\leq \vv{n}\}$ as
	follows:
	\begin{displaymath}
		T(\mathfrak{t})=
		G^1_{\mathrm{stat}}(-\vv{n},\mathfrak{t})-G^1_{\mathrm{stat}}(-\vv{n},\mathbf{0}).
	\end{displaymath}
	Use $T$ and $\nu''$ to construct the point-to-line representation as in
	\eqref{repn_pt_line} and thus define $G^2_{\mathrm{stat}}(v)$ for all $0\leq v\leq n$. By an
	application of Lemma 4.2 in \cite{BCS06}, the quantities
	$G^1_{\mathrm{stat}}(\cdot)$ and $G^2_{\mathrm{stat}}(\cdot)$ have the
	correct marginal distributions. It is easy to see that because of the
	coupling, we have (cf. Lemma 3.3 and Lemma 3.4 from \cite{SS19})
	\begin{equation}
		\label{coupling}
		G^1_{\mathrm{stat}}(v)=G^2_{\mathrm{stat}}(v)=
	G^1_{\mathrm{stat}}\left( -\vv{n},v \right) -G^1_{\mathrm{stat}}\left(
	-\vv{n},\mathbf{0} \right).
	\end{equation}
	for all $0\leq v \leq \vv{n}$.
	\end{proof}
	\begin{proof}[Proof of Proposition \ref{couple}]
		Immediate from Proposition \ref{coupling_coupling}.
	\end{proof}
	\begin{proof}[Proof of Propositon \ref{geodesic_coupling}]
		By the
		almost sure uniqueness of geodesics, for either representation, the stationary geodesic to
		$v_N$ (restricted to $\mathbb{Z}^2_{>0}$) can be
		reconstructed just by knowing the stationary passage times for all $v$
		with
		$\mathbf{0}\leq v\leq v_N$. For example,
		working with the point-to-line
		representation, if we have
		$G^2_{\mathrm{stat}}(v_N-\mathrm{e}_1)>G^2_{\mathrm{stat}}(v_N-\mathrm{e}_2)$,
		then the stationary geodesic to $v_N$ is just the stationary
		geodesic till $v_N-\mathrm{e}_1$ concatenated with the singleton
		$\{v_N\}$,
		and one can proceed recursively to obtain the portion of the
		stationary geodesic in $\mathbb{Z}^2_{>0}$. By Proposition
		\ref{coupling_coupling}, there is a coupling for which
		$G^1_{\text{stat}}(v)=G^2_{\text{stat}}(v)$ for all $\mathbf{0}\leq
		v\leq v_N$. By the above discussion, this implies that the
		stationary geodesics till $v_N$ agree in the region
		$\mathbb{Z}^2_{>0}$ for both the representations and Proposition
		\ref{geodesic_coupling} follows immediately.
	\end{proof}
\end{document}